\documentclass{amsart}
\usepackage{amssymb}
\usepackage{mathrsfs}
\usepackage{amsmath}
\usepackage{amsfonts}
\usepackage{pifont}
\usepackage{graphicx,amssymb,mathrsfs,amsmath}
\usepackage{tocvsec2}
\usepackage{color}
\baselineskip 18pt

\vfuzz2pt 
\hfuzz2pt 
\newtheorem{thm}{Theorem}[section]
\newtheorem{cor}[thm]{Corollary}
\newtheorem{lem}[thm]{Lemma}
\newtheorem{prop}[thm]{Proposition}
\newtheorem{prob}[thm]{Problem}
\theoremstyle{definition}
\newtheorem{defn}[thm]{Definition}
\newtheorem{example}[thm]{Example}
\theoremstyle{remark}
\newtheorem{rem}[thm]{Remark}
\numberwithin{equation}{section}

\begin{document}
\title[Disjoint Li-Yorke chaos in Fr\' echet spaces]{Disjoint Li-Yorke chaos in Fr\' echet spaces}

\author{Marko Kosti\' c}
\address{Faculty of Technical Sciences,
University of Novi Sad,
Trg D. Obradovi\' ca 6, 21125 Novi Sad, Serbia}
\email{marco.s@verat.net}

{\renewcommand{\thefootnote}{} \footnote{2010 {\it Mathematics
Subject Classification.} 47A06, 47A16.
\\ \text{  }  \ \    {\it Key words and phrases.} disjoint Li-Yorke chaos, disjoint Li-Yorke irregular vectors, disjoint distributional chaos, disjoint hypercyclicity, multivalued linear operators, Fr\' echet spaces.
\\  \text{  }  \ \ The author is partially supported by grant 174024 of Ministry of Science and Technological Development, Republic of Serbia.}}

\begin{abstract}
The main aim of this paper is to consider various notions of (dense) disjoint Li-Yorke chaos for general sequences of multivalued linear operators in Fr\' echet spaces. We also consider continuous analogues of introduced notions and provide certain applications to the abstract partial differential equations.
\end{abstract}
\maketitle

\section{Introduction and Preliminaries}\label{intro}

Assume that $X$ is a Fr\' echet space. As it is well known, a
linear operator $T$ on $X$ is called hypercyclic iff there
exists an element $x\in D_{\infty}(T)\equiv \bigcap_{n\in {\mathbb
N}}D(T^{n})$ whose orbit $\{ T^{n}x : n\in {{\mathbb N}}_{0} \}$ is
dense in $X;$ $T $ is called topologically transitive,
resp. topologically mixing, iff for every pair of open
non-empty subsets $U,\ V$ of $X,$ there exists $n_{0}\in {\mathbb
N}$ such that $T^{n_{0}}(U) \ \cap \ V \neq \emptyset ,$ resp. there
exists $n_{0}\in {\mathbb N}$ such that, for every $n\in {\mathbb
N}$ with $n\geq n_{0},$ $T^{n}(U) \ \cap \ V \neq \emptyset .$ We accept the following notion of chaos: a
linear operator $T$ on $X$ is called chaotic iff it is topologically transitive and the set of periodic points of $T,$ defined by $\{x\in D_{\infty}(T) : (\exists n\in {\mathbb N})\, T^{n}x=x\},$ is dense in $X.$
For further information concerning topological dynamics of linear operators in Banach and Fr\' echet spaces, we refer the reader to the monographs \cite{bayart} by F. Bayart, E. Matheron,
\cite{erdper} by K.-G. Grosse-Erdmann, A. Peris, and a forthcoming one \cite{novascience} by the author.  

The notion of a Li-Yorke irregular vector in Hilbert space has been defined for the first time by B. Beauzamy in \cite{bea-b}. After that, Li-Yorke linear dynamics in Hilbert, Banach and Frechet function spaces has been analyzed by a great number of other authors including  G. T. Pr\v ajitur\v a \cite{parij}, T. Berm\'udez et al \cite{2011}, N. C. Bernardes Jr et al \cite{band}, X. Wu \cite{xwu} and Z. Yin et al \cite{countable} (see also \cite{nis-dragan}, \cite{cheli}).
It is well known that any linear hypercyclic operator needs to be Li-Yorke chaotic as well as that the converse statement does not hold in general. On the other hand, the notion of distributional chaos was introduced by B. Schweizer and J. Sm\' ital in \cite{smital} (1994). In linear dynamics, 
distributional chaos was firstly considered in the analyses of quantum harmonic oscillator, by J. Duan et al \cite{duan} (1999) and P. Oprocha \cite{p-oprocha} (2006); the first systematic studies of linear distributional chaos is those ones carried out by 
N. C. Bernardes Jr. et al \cite{2013JFA} (2013) and J. A. Conejero et al \cite{mendoza} (2016). Further information about Li-Yorke chaos and distributional chaos in metric and Fr\' echet spaces can be obtained by consulting \cite{novascience} and references cited therein.

Disjointness in linear dynamics was introduced independently by L. Bernal--Gonz\'alez \cite{bg07} (2007) and J. B\`es, A. Peris \cite{bp07} (2007). From then on, a great number of various notions of disjointness for linear operators has been introduced and analyzed; for the notion of disjoint mixing operators and disjoint supercyclic operators one may refer e.g. to the article \cite{bm201345}
by J. B\`es et al and the doctoral dissertation \cite{ma10} of \"O. Martin, respectively. Regarding disjoint dynamics of abstract partial differential equations, the first step has been made by the author
in \cite[Subsection 3.1.1]{knjigaho}, where disjointness for strongly continuous semigroups induced by semiflows has been examined. These results have been
recently reconsidered in a joint research study \cite{ckpv} with 
C.-C. Chen, S. Pilipovi\'c and D. Velinov for $C$-distribution semigroups and $C$-distribution cosine functions in Fr\' echet spaces, as well as in a joint research study with V. E. Fedorov \cite{peruvian-waq} for abstract degenerate fractional differential equations in Fr\'echet spaces.
The notion of disjoint (reiterative, ($m_{n}$-)) distributionally chaotic operators and some applications to abstract PDEs have been investigated in \cite{nsjom-novi}-\cite{ddcNS} and \cite{cheli}.
To the best knowledge of the author, this is the first paper, in both linear and non-linear setting, which considers the notion of disjoint Li-Yorke chaos. The genesis of paper is motivated by our recent results on the existence of special types of dense Li-Yorke irregular manifolds obtained in a joint research study with A. Bonilla \cite{bk}, and later expanded by the author in \cite{reit}. 

The organization and main ideas of paper are given as follows. First of all, we recollect some necessary preliminaries about lower and upper $m_{n}$-densities of subsets in ${\mathbb N};$ after that, in Subsection \ref{MLOs}, we remind ourselves of the basic facts and definitions from the theory of multivalued linear operators in Fr\' echet spaces.
Various notions of disjoint Li-Yorke chaos were introduced and analyzed in Section \ref{marek-prom}: In Definition \ref{DC-unbounded-fric-DISJOINT-emenly}, we introduce the notion of $(d,\tilde{X},m_{n},s,i)$-Li-Yorke chaos, where $1\leq s,i\leq 2;$ in Definition \ref{DC-unbounded-fric-DISJOINT-emen-ly}, we introduce the notion of $(d,\tilde{X},s,i)$-Li-Yorke chaos, where $1\leq s\leq 2$
and $3\leq i\leq 4,$ and finally, in Definition \ref{fabrika}, we introduce the notion of $(d,\tilde{X},m_{n},s,i)$-Li-Yorke chaos, where $3\leq s\leq 4$ and $1\leq i\leq 2.$ Here, $(m_{n})$ denotes an increasing sequence of positive reals satisfying $\liminf_{n\rightarrow \infty}\frac{m_{n}}{n}>0.$
Disjoint Li-Yorke chaos under our consideration generalizes the notions of disjoint hypercyclicity and disjoint distributional chaoticity for multivalued linear operators, and it cannot be reduced to the Li-Yorke chaos of single components, as illustrated in Example \ref{count-grof-ly}. In Proposition \ref{beqa}, we clarify the fundamental inclusions for various types of disjoint Li-Yorke chaos. Following a simple observation from \cite{bk}, in Proposition \ref{banach-space}, we connect a certain type of disjointness condition of Li-Yorke type with reiterative distributional chaos of type $0.$
Disjoint Li-Yorke irregular vectors and manifolds have been investigated in Subsection \ref{emojstevo}. 

Speaking-matter-of-factly, the starting point for genesis of this paper is the following theorem, which can be deduced obeying the method developed in the proof of \cite[Theorem 15]{2013JFA} and our previous considerations of dense (disjoint, reiterative) $m_{n}$-distributional chaos contained in the papers \cite{bk} and \cite{ddc}-\cite{ddcNS}; from the sake of completeness, we will outline the main details of proof  in Section \ref{main-reza}:

\begin{thm}\label{na-dobro1-ly}
Suppose that $X$ is separable, $m\in {\mathbb N},$ $((T_{j,k})_{k\in {\mathbb N}})_{1\leq j\leq N}$ is a sequence in $L(X,Y),$
$X_{0}$ is a dense linear subspace of $X,$ $(m_{n})\in {\mathrm R}$ as well as:
\begin{itemize}
\item[(i)] $\lim_{k\rightarrow \infty}T_{j,k}x=0,$ $x\in X_{0},$ $j\in {\mathbb N}_{N};$
\item[(ii)] there exist a vector $y\in X$ and an increasing sequence $(n_{k})$ tending to infinity such that $\lim_{k\rightarrow \infty}p_{m}^{Y}(T_{j,n_{k}}y)=+\infty ,$ $j\in {\mathbb N}_{N}$ [$\lim_{k\rightarrow \infty}\|T_{j,n_{k}}y\|_{Y}=+\infty ,$ $j\in {\mathbb N}_{N}$, provided that $Y$ is a Banach space].
\end{itemize}
Then there exists a dense
submanifold $W$ of $X$ consisting of those vectors $x$ which are $(d,m_{n})$-distributionally near to zero of type $1$ for $((T_{j,k})_{k\in {\mathbb N}})_{1\leq j\leq N}$ and for which there exists a strictly increasing subsequence $(l_{k})$ of $(n_{k})$ such that
the sequence $(p_{m}(T_{j,l_{k}}x))_{k\in {\mathbb N}}$ tends to $+\infty$ for all $j\in {\mathbb N}_{N}$ [$(\|T_{j,l_{k}}x\|_{Y})_{k\in {\mathbb N}}$ tends to $+\infty$ for all $j\in {\mathbb N}_{N},$ provided that $Y$ is a Banach space]. In particular, $((T_{j,k})_{k\in {\mathbb N}})_{1\leq j\leq N}$ is densely $(d,W,m_{n},1,1)$-Li-Yorke chaotic.
\end{thm}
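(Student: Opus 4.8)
The plan is to follow the by-now-standard scheme for producing dense (distributionally) irregular manifolds, in the spirit of \cite[Theorem 15]{2013JFA} and \cite{bk}, \cite{ddc}, adapting it to the finite disjoint family and to the weighted $m_{n}$-density. Since $N$ is finite, I would first reduce the disjointness to a single scalar control by working with $\max_{1\le j\le N}p_{m}^{Y}(T_{j,\cdot}\,\cdot)$ in place of the individual evaluations; because the intersection of finitely many sets of upper $m_{n}$-density $1$ again has upper $m_{n}$-density $1$, and a common subsequence $(l_{k})$ can be extracted from $(n_{k})$ once each of the $N$ sequences is unbounded along it, every clause ``for all $j\in\mathbb{N}_{N}$'' can be handled simultaneously. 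The problem then becomes the classical one: from a dense subspace $X_{0}$ on which all orbits vanish (condition (i)) and one vector $y$ whose images blow up along $(n_{k})$ (condition (ii)), manufacture a dense subspace whose nonzero vectors are all near zero on a set of upper $m_{n}$-density $1$ and unbounded along a subsequence of $(n_{k})$.

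Second, I would construct a single vector with both properties as an interlacing series $x=\sum_{p}x_{p}$ with each $x_{p}\in X_{0}$, built in stages that balance two competing demands. On long blocks of times I invoke condition (i): each already-chosen summand lies in $X_{0}$, so its image is eventually small, and after discarding a set of times of $m_{n}$-density $0$ the partial sums remain near zero, while a tail estimate (choosing each $x_{p}$ small enough in the relevant seminorms) keeps the infinite remainder negligible there. On a sparse set of \emph{activation} times drawn from $(n_{k})$ I instead insert a large scalar multiple of an $X_{0}$-approximant of $y$, arranged so that at that single time its image under every $T_{j,\cdot}$ dominates all other summands; this forces $p_{m}(T_{j,l_{k}}x)\to+\infty$ along the resulting subsequence $(l_{k})$ at the cost of only a density-$0$ set of times, hence without destroying the near-zero-of-type-$1$ property.

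Third, to upgrade this to a dense manifold, I would run the inductive construction diagonally against a fixed dense sequence $(w_{p})$ of $X$: build vectors $z_{1},z_{2},\dots$ with $z_{p}$ close to $w_{p}$ (density of $X_{0}$), each near zero on a common set of upper $m_{n}$-density $1$, and with pairwise separated activation times so that in any finite nonzero combination the summand of largest active index dominates at its own activation times. The submanifold $W:=\operatorname{span}\{z_{p}:p\in\mathbb{N}\}$ is then dense, and each nonzero element is near zero on a set of upper $m_{n}$-density $1$ (a finite intersection of density-$1$ sets) and unbounded along a subsequence of $(n_{k})$; the final ``in particular'' clause follows because, by Definition \ref{DC-unbounded-fric-DISJOINT-emenly}, dense $(d,W,m_{n},1,1)$-Li-Yorke chaos is exactly the coexistence of these two behaviours on a dense manifold.

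The step I expect to be the main obstacle is the domination bookkeeping in the manifold stage: guaranteeing that for \emph{every} nonzero finite combination the blow-up survives requires that, at the activation times of $z_{p}$, not only the earlier $z_{1},\dots,z_{p-1}$ but also the infinitely many later $z_{p+1},z_{p+2},\dots$ contribute negligibly under all $N$ operator sequences. I would control the later vectors by requiring, at the moment $z_{q}$ with $q>p$ is defined, that its relevant seminorms at the already-fixed activation times be majorized by a rapidly decreasing summable sequence, and control the earlier ones by placing each new activation time far enough out that condition (i) has already rendered the earlier $X_{0}$-ingredients small there. Making these two demands compatible with the weighted density requirement is the delicate point, and it is precisely here that the standing hypothesis $\liminf_{n\to\infty}m_{n}/n>0$ is used to ensure that the discarded activation times remain of $m_{n}$-density zero.
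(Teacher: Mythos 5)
Your construction scheme is essentially the one the paper follows (the method of \cite[Theorem 15]{2013JFA}): building blocks from $X_{0}$ approximating scaled copies of $y$, sparse activation times drawn from $(n_{k})$, tail control via smallness in suitable seminorms (the paper packages this bookkeeping as the renormalization \eqref{grozno}), and density obtained by running the construction against a dense sequence. The genuine problem is the density principle on which you hang both combination steps --- over the indices $j\in {\mathbb N}_{N}$ and over the summands of a finite linear combination. The claim that a finite intersection of sets of upper $m_{n}$-density $1$ again has upper $m_{n}$-density $1$, equivalently that a finite union of sets of lower $m_{n}$-density $0$ has lower $m_{n}$-density $0$, is false, and the union form is exactly what Definition \ref{temica} requires. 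Already for $m_{n}\equiv n\in {\mathrm R}$: choose integers with $a_{k+1}/a_{k}\rightarrow \infty$ and put $B_{1}:=\bigcup_{k}[a_{2k},a_{2k+1})\cap {\mathbb N}$ and $B_{2}:=\bigcup_{k}[a_{2k+1},a_{2k+2})\cap {\mathbb N}$. Then $\underline{d}_{m_{n}}(B_{1})=0$ (compute along the windows $[1,a_{2k}]$) and $\underline{d}_{m_{n}}(B_{2})=0$ (along $[1,a_{2k+1}]$), while $B_{1}\cup B_{2}$ is cofinite and so has lower density $1$. Lower density is not subadditive because the two liminfs may be attained along disjoint subsequences of windows; consequently, if you first produce, for each $j$ (or for each generator $z_{p}$ of your manifold), its own good set $A_{j}$ with $\underline{d}_{m_{n}}(A_{j}^{c})=0$ and only afterwards intersect, the argument fails.

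The repair --- and it is what the paper's proof actually does --- is to never produce these sets separately. One fixes a single lacunary block structure in advance, namely the times $(k_{l})$ and the sequence $(r_{q})$ subject to \eqref{miruga}, such that (a) the smallness estimates $p_{Y}^{l}(T_{j,k}x_{s})<1/l$ are enforced for all $j\in {\mathbb N}_{N}$ simultaneously (possible because hypotheses (i)--(ii) hold for every $j$ with the same $X_{0}$ and the same $(n_{k})$), and (b) every vector assembled from the blocks, hence every finite combination in the manifold, has its bad set contained in one master set $B$ with $\underline{d}_{m_{n}}(B)=0$; containment in a fixed such $B$ survives arbitrary unions, which is what rescues the conclusion. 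Your phrases ``common set'' and ``pairwise separated activation times'' gesture at precisely this alignment, so the gap is reparable inside your own scheme, but as written the two key steps rest on a false principle. A smaller point: extracting a common blow-up subsequence across $j$ is legitimate here only because hypothesis (ii) gives genuine limits $+\infty$ along the one sequence $(n_{k})$ for every $j$; mere unboundedness of each of the $N$ sequences along $(n_{k})$, which is what your phrasing appeals to, would not permit a common subsequence in general.
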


In a separate part of the third section, Subsection \ref{shiftojed}, we analyze certain corollaries of Theorem \ref{na-dobro1-ly} and applications to unilateral backward weighted shift operators and weigted forward shift operators, which need not be continuous in our investigation (see also Example \ref{count-grof-ly324}).
The Li-Yorke chaos for translation semigroups in weighted function spaces have been considered for the first time
by X. Wu in \cite{xwu}, who proved that a strongly continuous translation semigroup $(T(t))_{t\geq 0}$ is Li-Yorke chaotic on $
L^{p}_{\rho}([0,\infty))$ iff $\liminf_{t\rightarrow +\infty}\rho(t)=0,$ 
provided in advance that the function $\rho$ is bounded from above; in this case, being Li-Yorke chaotic and hypercyclic for $(T(t))_{t\geq 0}$ is the same thing.
This is no longer case if the function $\rho$ is not bounded from above, when there exists a strongly continuous translation semigroup that is Li-Yorke chaotic, even completely distributionally chaotic, but not hypercyclic (\cite{bara}).
It is worth noting that X. Wu has analyzed in \cite{xwu} several various notions similar to Li-Yorke chaos, like sensitivity, infinite sensitivity, spatio-temporal chaos, dense $\delta$-chaos and generic ($\delta$-)chaos; see \cite{band} for discrete analogues. The consideration
of these notions for disjointness requires further analyses and it is without scope of this paper; at this place, we want only to mention in passing that the assertion of \cite[Lemma 2.1]{xwu} holds not only for a translation $C_{0}$-semigroup on a weighted function space but also for any strongly continuous operator family $(T(t))_{t\geq 0}\subseteq L(X,Y),$ where $X$ and $Y$ are possibly different Fr\' echet spaces. 
The analysis from \cite{xwu} has been continued by the author in \cite{nis-dragan}, especially for translation semigroups and semigroups induced by semiflows in weighted function spaces. 

Disjoint Li-Yorke chaotic properties of abstract PDEs in Fr\'echet spaces have been analyzed in Section \ref{well-posed}, where we initiate the studies of (disjoint) Li-Yorke chaos for abstract differential equations of second order and (disjoint) Li-Yorke chaos for abstract differential equations of fractional order in time variable; albeit formulated for disjoint $(f,1,1)$-Li-Yorke chaos, a great number of examples are given for disjoint $(f,1,1)$-distributional chaos which is a much stronger notion, whose discrete counterpart has been recently analyzed in \cite{ddcNS} (here, $f : [0,\infty) \rightarrow [1,\infty)$ is an increasing mapping satisfying $\liminf_{t\rightarrow +\infty}\frac{f(t)}{t}>0$). This is a foundational study of disjoint Li-Yorke chaos and we would like to say that we have found this theme very difficult to be thoroughly explored in theoretical sense, primarily from the facts that the methods from \cite{2011}, \cite{band}, \cite{nis-dragan} and \cite{parij} cannot be so easily reexamined for disjointness. For the sake of brevity, we propose only one open problem in Subsection \ref{emojstevo} (Problem \ref{pizdatistrina}).

We use the standard terminology throughout the paper. We assume that $X$ and $Y$ are two
non-trivial Fr\' echet spaces over the same field of scalars ${\mathbb K}\in \{ {\mathbb R}, {\mathbb C} \}$ as well as that the topologies of $X$ and $Y$ are 
induced by the fundamental systems $(p_{n})_{n\in {\mathbb N}}$ and $(p_{n}^{Y})_{n\in {\mathbb N}}$ of
increasing seminorms, respectively (separability of $X$ or $Y$ is not assumed a priori in future). Then the translation invariant metric\index{ translation invariant metric} $d :
X\times X \rightarrow [0,\infty),$ defined by
\begin{equation}\label{metri}
d(x,y):=\sum
\limits_{n=1}^{\infty}\frac{1}{2^{n}}\frac{p_{n}(x-y)}{1+p_{n}(x-y)},\
x,\ y\in X,
\end{equation}
enjoys the following properties:
$
d(x+u,y+v)\leq d(x,y)+d(u,v),$ $x,\ y,\ u,\
v\in X;
$
$d(cx,cy)\leq (|c|+1)d(x,y),$ $ c\in {\mathbb K},\ x,\ y\in X,
$
and
$
d(\alpha x,\beta x)\geq \frac{|\alpha-\beta|}{1+|\alpha-\beta|}d(0,x),$ $x\in X,$ $ \alpha,\ \beta \in
{\mathbb K}.$
Define the translation invariant metric $d_{Y} :
Y\times Y \rightarrow [0,\infty)$ by replacing $p_{n}(\cdot)$ with
$p_{n}^{Y}(\cdot)$ in (\ref{metri}). We endow the Fr\' echet space $Y^{k}$ with the metric 
$$
d_{Y^{k}}(\vec{x},\vec{y}):=\max_{1\leq j\leq k}d_{Y}(x_{j},y_{j}),\quad \vec{x}=(x_{1},\cdot \cdot \cdot,x_{k})\in Y^{k},\ \vec{y}=(y_{1},\cdot \cdot \cdot,y_{k})\in Y^{k},
$$
where $k\in {\mathbb N}.$

Suppose that $C\in L(X)$ is an injective operator.
Put $p_{n}^{C}(x):=p_{n}(C^{-1}x),$ $n\in {\mathbb N},$ $x\in R(C).$ Then
$p_{n}^{C}(\cdot)$ is a seminorm on $R(C)$ and the calibration
$(p_{n}^{C})_{n\in {\mathbb N}}$ induces a Fr\' echet locally convex topology on
$R(C);$ we shall denote this space simply by $[R(C)].$ Notice
that $[R(C)]$ is a Fr\' echet (Banach) space provided that
$X$ is. 

Given $s\in {\mathbb R}$ in advance, set $\lfloor s \rfloor:=\sup \{
l\in {\mathbb Z} : s\geq l \}$ and $\lceil s \rceil:=\inf \{ l\in
{\mathbb Z} : s\leq l \}.$ Denote by $E_{\alpha,\beta}(z)$ the Mittag-Leffler
function $E_{\alpha,\beta}(z):=\sum
 _{n=0}^{\infty}z^{n}/\Gamma(\alpha n+\beta),$ $z\in
{\mathbb C}$. Set, for short, $E_{\alpha}(z):=E_{\alpha, 1}(z),$ $z\in {\mathbb C}$ and $\Sigma_{\vartheta}:=\{re^{i\theta} : |\theta|<\vartheta\}$ ($\vartheta \in (0,\pi]$).
We refer the reader to \cite{knjigaho} for the notions of fractionally integrated $C$-semigroups and $C$-cosine functions, $C$-distribution semigroups and $C$-distribution cosine functions, $\alpha$-times $C$-regularized resolvent families and their integral generators ($\alpha>0$, $C\in L(X)$ injective). Throughout the paper, we assume that $N\in {\mathbb N} \setminus \{1\};$ set ${\mathbb N}_{N}:=\{1,2,\cdot \cdot \cdot, N\}.$

We will use the following notions of lower and upper densities for a subset $A\subseteq {\mathbb N}:$ 

\begin{defn}\label{prckojed} (\cite{F-operatori})
Let $(m_{n})$ be an increasing sequence in $[1,\infty).$ Then:
\begin{itemize}
\item[(i)] The lower $(m_{n})$-density of $A,$ denoted by $\underline{d}_{m_{n}}(A),$ is defined through
$$
\underline{d}_{{m_{n}}} (A):=\liminf_{n\rightarrow \infty}\frac{|A \cap [1,m_{n}]|}{n};
$$
\item[(ii)] The lower $l;(m_{n})$-Banach density of $A,$ denoted by $\underline{Bd}_{l;{m_{n}}}(A),$
is defined through
$$
\underline{Bd}_{l;{m_{n}}}(A):=\liminf_{s\rightarrow +\infty}\liminf_{n\rightarrow \infty}\frac{|A \cap [n+1,n+m_{s}]|}{s}.
$$ 
\end{itemize}
\end{defn}

Denote by ${\mathrm R}$ the class consisting of all increasing sequences $(m_{n})$ of positive reals satisfying $\liminf_{n\rightarrow \infty}\frac{m_{n}}{n}>0,$ i.e., there exists a finite constant $L>0$ such that $n\leq L m_{n},$ $n\in {\mathbb N}.$ Unless stated otherwise, we will always assume that $(m_{n})\in {\mathrm R}$
henceforth. The assumption $m_{n} \in {\mathbb N}$ for all $n\in {\mathbb N}$ can be made.

\subsection{Multivalued linear operators}\label{MLOs}

A multivalued map (multimap) ${\mathcal A} : X \rightarrow P(Y)$ is said to be a multivalued
linear operator (MLO) iff the following two conditions hold:
\begin{itemize}
\item[(i)] $D({\mathcal A}) := \{x \in X : {\mathcal A}x \neq \emptyset\}$ is a linear subspace of $X$;
\item[(ii)] ${\mathcal A}x +{\mathcal A}y \subseteq {\mathcal A}(x + y),$ $x,\ y \in D({\mathcal A})$
and $\lambda {\mathcal A}x \subseteq {\mathcal A}(\lambda x),$ $\lambda \in {\mathbb K},$ $x \in D({\mathcal A}).$
\end{itemize}
If 
$x,\ y\in D({\mathcal A})$ and $\lambda,\ \eta \in {\mathbb K}$ with $|\lambda| + |\eta| \neq 0,$ then it is well-known that 
$\lambda {\mathcal A}x + \eta {\mathcal A}y = {\mathcal A}(\lambda x + \eta y);$ furthermore, if ${\mathcal A}$ is an MLO, then ${\mathcal A}0$ is a linear manifold in $Y$
and ${\mathcal A}x = f + {\mathcal A}0$ for any $x \in D({\mathcal A})$ and $f \in {\mathcal A}x.$ Set $R({\mathcal A}):=\{{\mathcal A}x :  x\in D({\mathcal A})\}.$
The set ${\mathcal A}^{-1}0 = \{x \in D({\mathcal A}) : 0 \in {\mathcal A}x\}$ is called the kernel\index{multivalued linear operator!kernel}
of ${\mathcal A}$ and it is denoted henceforth by $N({\mathcal A})$ or Kern$({\mathcal A}).$ The inverse ${\mathcal A}^{-1}$ of an MLO is defined by
$D({\mathcal A}^{-1}) := R({\mathcal A})$ and ${\mathcal A}^{-1} y := \{x \in D({\mathcal A}) : y \in {\mathcal A}x\}$.
Let us recall that ${\mathcal A}$ is called purely multivalued iff ${\mathcal A}0\neq \{0\}.$ \index{multivalued linear operator!sum}

Suppose that ${\mathcal A} : X \rightarrow P(Y)$ and ${\mathcal B} : Y\rightarrow P(Z)$ are two MLOs, where $Z$ is a Fr\' echet space over the same field of scalars as $X$ and $Y$. The product of ${\mathcal A}$
and ${\mathcal B}$ is defined by $D({\mathcal B}{\mathcal A}) :=\{x \in D({\mathcal A}) : D({\mathcal B})\cap {\mathcal A}x \neq \emptyset\}$ and\index{multivalued linear operator!product}
${\mathcal B}{\mathcal A}x:=
{\mathcal B}(D({\mathcal B})\cap {\mathcal A}x).$ Then ${\mathcal B}{\mathcal A} : X\rightarrow P(Z)$ is an MLO and
$({\mathcal B}{\mathcal A})^{-1} = {\mathcal A}^{-1}{\mathcal B}^{-1}.$ The multiplications of MLOs with scalars and sums of MLOs are taken in the usual way. 
The integer powers of an MLO ${\mathcal A} :  X\rightarrow P(X)$ are defined inductively by: ${\mathcal A}^{0}=:I;$
$$
D({\mathcal A}^{n}) := \bigl\{x \in  D({\mathcal A}^{n-1}) : D({\mathcal A}) \cap {\mathcal A}^{n-1}x \neq \emptyset \bigr\},
$$
and
$$
{\mathcal A}^{n}x := \bigl({\mathcal A}{\mathcal A}^{n-1}\bigr)x =\bigcup_{y\in  D({\mathcal A}) \cap {\mathcal A}^{n-1}x}{\mathcal A}y,\quad x\in D( {\mathcal A}^{n}).
$$
Set $D_{\infty}({\mathcal A}):=\bigcap_{n\in {\mathbb N}}D({\mathcal A}^{n}).$

Suppose that ${\mathcal A}$ is an MLO in $ X.$ Then a point $\lambda \in {\mathbb C}$ is said to be an eigenvalue of ${\mathcal A}$
iff there exists a vector $x\in X\setminus \{0\}$ such that $\lambda x\in {\mathcal A}x;$ we call $x$ an eigenvector of operator ${\mathcal A}$ corresponding to the eigenvalue $\lambda.$ The point spectrum of ${\mathcal A},$ $\sigma_{p}({\mathcal A})$ for short, is defined as the set consisting of all eigenvalues of ${\mathcal A}.$

We need the following definition from \cite{cheli}:

\begin{defn}\label{obori}
We say that the sequence
$({\mathcal A}_{j})_{j\in {\mathbb N}}$ of MLOs is  $\tilde{X}$-Li-Yorke chaotic
iff there exists an uncountable set $S\subseteq \bigcap_{j\in {\mathbb N}}D({\mathcal A}_{j}) \bigcap \tilde{X}$ such that
for every pair $(x,y) \in S \times S$ of distinct points and for every integer $j\in {\mathbb N}$ there exist elements $x_{j}\in {\mathcal A}_{j}x$ and $y_{j}\in {\mathcal A}_{j}y$ so that
\[
 \liminf_{j \to \infty} d_{Y}\bigl(x_{j} ,y_{j} \bigr) = 0
 \ \ \mbox{ and } \ \
 \limsup_{j \to \infty} d_{Y}\bigl(x_{j} ,y_{j}\bigr)> 0.
\]
In this case, $S$ is called a  $\tilde{X}$-Li-Yorke scrambled set for $({\mathcal A}_{j})_{j\in {\mathbb N}}$ and each
such pair $(x,y)$ is called a  $\tilde{X}$-Li-Yorke pair for $({\mathcal A}_{j})_{j\in {\mathbb N}}$. We say that $({\mathcal A}_{j})_{j\in {\mathbb N}}$ is densely $\tilde{X}$-Li-Yorke chaotic iff $S$ can be chosen to be dense in $\tilde{X}.$
\end{defn}

We refer the reader to \cite{cheli}, \cite{band} and \cite{nis-dragan} for the notion and properties of Li-Yorke (semi-)irregular vectors.
Any notion introduced above is accepted also for 
an MLO operator ${\mathcal A} : X \rightarrow P(X)$ by using the sequence $({\mathcal A}_{j}\equiv
{\mathcal A}^{j})_{j\in {\mathbb N}}$ for definition.  Finally, if $\tilde{X}=X,$ then we remove the prefix ``$\tilde{X}$-'' from the terms and notions.

We will also use the following definition from \cite{ddcNS}:

\begin{defn}\label{temica} 
Let $(m_{n})\in {\mathrm R}.$
Suppose that, for every $j\in {\mathbb N}_{N}$ and $k\in {\mathbb N},$ ${\mathcal A}_{j,k} : D({\mathcal A}_{j,k})\subseteq X \rightarrow Y$ is an MLO and $x\in \bigcap_{j=1}^{N}\bigcap_{k=1}^{\infty}D({\mathcal A}_{j,k}),$ $x\neq 0.$ Then we say that $x$ is (reiteratively) $(d,m_{n})$-distributionally
near to $0$ of type $1$ for $(({\mathcal A}_{j,k})_{k\in {\mathbb N}})_{1\leq j\leq N}$
iff there exists $A\subseteq {\mathbb N}$ such that ($\underline{Bd}_{l;m_{n}}(A^{c})=0$)
$\underline{d}_{m_{n}}(A^{c})=0$ as well as for each $j\in {\mathbb N}_{N}$ and $k\in {\mathbb N}$ there exists $x_{j,k}\in {\mathcal A}_{j,k}x$ such that
$\lim_{k\in A,k\rightarrow \infty}x_{j,k}=0,$ $j\in {\mathbb N}_{N}.$ 
\end{defn}

Definition of $(d,\tilde{X},i)$-distributional chaoticity of tuple $(({\mathcal A}_{j,k})_{k\in {\mathbb N}})_{1\leq j\leq N},$ where $i\in {\mathbb N}_{12},$ is too large to be repeated here. For further information on the subject, we refer the reader to \cite{ddc}. 

In this paper, 
we will consider the spaces $L^{p}_{\rho_{1}}(\Omega)$ and $C_{0,\rho}(\Omega),$
where $\Omega$ is an open non-empty subset of ${{\mathbb
R}^{n}}.$ Here, $\rho_{1} : \Omega \rightarrow (0,\infty)$ is a locally
integrable function, the norm of an element $f\in L^{p}_{\rho_{1}}(\Omega)$ is given by $||f||_{p}:=(\int
_{\Omega}|f(x)|^{p} \rho_{1}(x)\, dx)^{1/p}$ and $dx$ denotes Lebesgue's measure on 
${{\mathbb
R}^{n}}.$
Recall that, for a given upper
semicontinuous function $\rho : \Omega \rightarrow (0,\infty)$, the space $C_{0,\rho}(\Omega)$ consists of all continuous functions $f : \Omega \rightarrow
{\mathbb C}$ satisfying that, for every $\epsilon>0,$ $\{x\in \Omega :
|f(x)|\rho(x)\geq \epsilon \}$ is a compact subset of $\Omega;$ equipped
with the norm $||f||:=\sup_{x\in \Omega}|f(x)|\rho(x),$
$C_{0,\rho}(\Omega)$ becomes a Banach space. 

\section{Disjoint Li-Yorke chaos, disjoint Li-Yorke irregular vectors and manifolds}\label{marek-prom}

Let $ \epsilon>0,$ and let $(x_{j,k})_{k\in {\mathbb N}}$ and $(y_{j,k})_{k\in {\mathbb N}}$ be sequences in $X$ ($1\leq j\leq N$). Consider the following conditions:
\begin{align}\label{frechet-banach1}
\begin{split}
& (\exists m\in {\mathbb N})(\forall k\in {\mathbb N})(\exists l_{k}\in {\mathbb N}) \mbox{  s.t. }l_{k}<l_{k+1},\ k\in {\mathbb N},
\\ &\mbox{ and }\lim_{k\rightarrow \infty}p_{m}^{Y}\bigl(x_{j,l_{k}}-y_{j,l_{k}}\bigr)=+\infty, \ k\in {\mathbb N},\ j\in {\mathbb N}_{N},\mbox{ provided that }Y\mbox{ is a Fr\' echet space}, or
\\
& (\forall k\in {\mathbb N})(\exists l_{k}\in {\mathbb N}) \mbox{  s.t. }l_{k}<l_{k+1},\ k\in {\mathbb N},
\\ & \mbox{ and }\lim_{k\rightarrow \infty}\bigl\|x_{j,l_{k}}-y_{j,l_{k}}\bigr\|_{Y}=+\infty, \  j\in {\mathbb N}_{N},\mbox{ provided that }Y\mbox{ is a Banach space};
\end{split}
\end{align}
\begin{align}\label{frechet-banach2}
\begin{split}
& (\exists m\in {\mathbb N})(\forall k\in {\mathbb N})(\forall j\in {\mathbb N}_{N})(\exists l_{k}^{j}\in {\mathbb N}) \mbox{  s.t. }l_{k}^{j}<l_{k+1}^{j},\ k\in {\mathbb N},\ j\in {\mathbb N}_{N},
\\ &\mbox{ and }\lim_{k\rightarrow \infty}p_{m}^{Y}\bigl(x_{j,l_{k}^{j}}-y_{j,l_{k}^{j}}\bigr)=+\infty, \ k\in {\mathbb N},\ j\in {\mathbb N}_{N},\mbox{ provided that }Y\mbox{ is a Fr\' echet space}, or
\\
& (\forall k\in {\mathbb N})(\forall j\in {\mathbb N}_{N})(\exists l_{k}^{j}\in {\mathbb N}) \mbox{  s.t. }l_{k}^{j}<l_{k+1}^{j},\ k\in {\mathbb N},\ j\in {\mathbb N}_{N},
\\ & \mbox{ and }\lim_{k\rightarrow \infty}\bigl\|x_{j,l_{k}^{j}}-y_{j,l_{k}^{j}}\bigr\|_{Y}=+\infty, \  j\in {\mathbb N}_{N},\mbox{ provided that }Y\mbox{ is a Banach space};
\end{split}
\end{align}
\begin{align}\label{bunda1}
\underline{d}_{m_{n}}\Biggl( \bigcup_{j\in {\mathbb N}_{N}} \bigl\{k \in {\mathbb N} : d_{Y}\bigl(x_{j,k},y_{j,k}\bigr)
\geq \epsilon \bigr\}\Biggr)=0;
\end{align}
\begin{align}\label{bunda2}
\underline{Bd}_{l;m_{n}}\Biggl( \bigcup_{j\in {\mathbb N}_{N}} \bigl\{k \in {\mathbb N} : d_{Y}\bigl(x_{j,k},y_{j,k}\bigr)
\geq \epsilon \bigr\}\Biggr)=0;
\end{align}
\begin{align}\label{bunda}
(\forall k\in {\mathbb N})(\exists n_{k}\in {\mathbb N}) \mbox{  s.t. }n_{k}<n_{k+1},\ k\in {\mathbb N}\mbox{ and }\lim_{k\rightarrow \infty}d_{Y}\bigl(x_{j,n_{k}},y_{j,n_{k}}\bigr)=0, \  j\in {\mathbb N}_{N};
\end{align}
\begin{align}\label{bundaw}
(\forall k\in {\mathbb N})(\forall j\in {\mathbb N}_{N})(\exists n_{k}^{j}\in {\mathbb N}) \mbox{  s.t. }n_{k}^{j}<n_{k+1}^{j},\ k\in {\mathbb N}\mbox{ and }\lim_{k\rightarrow \infty}d_{Y}\bigl(x_{j,n_{k}^{j}},y_{j,n_{k}^{j}}\bigr)=0, \  j\in {\mathbb N}_{N}.
\end{align}

Now we are ready to propose the following definitions:

\begin{defn}\label{DC-unbounded-fric-DISJOINT-emenly}
Let $i\in {\mathbb N}_{2}$ and $(m_{n})\in {\mathrm R}.$
Suppose that, for every $j\in {\mathbb N}_{N}$ and $k\in {\mathbb N},$ ${\mathcal A}_{j,k} : D({\mathcal A}_{j,k})\subseteq X \rightarrow Y$ is an MLO and $\tilde{X}$ is a linear
subspace of $X.$
Then we say that the sequence $(({\mathcal A}_{j,k})_{k\in
{\mathbb N}})_{1\leq j\leq N}$ is disjoint
$(\tilde{X},m_{n},1,i)$-Li-Yorke chaotic, $(d,\tilde{X},m_{n},1,i)$-Li-Yorke chaotic in short, resp. disjoint
$(\tilde{X},m_{n},2,i)$-Li-Yorke chaotic, $(d,\tilde{X},m_{n},2,i)$-Li-Yorke chaotic in short,  iff there exists an uncountable
set $S\subseteq \bigcap_{j=1}^{N} \bigcap_{k=1}^{\infty} D({\mathcal A}_{j,k}) \cap \tilde{X}$ such that for each $\epsilon>0$ and for each pair $x,\
y\in S$ of distinct points we have that for each $j\in {\mathbb N}_{N}$ and $k\in {\mathbb N}$ there exist elements $x_{j,k}\in {\mathcal A}_{j,k}x$ and $y_{j,k}\in {\mathcal A}_{j,k}y$ such that
\eqref{frechet-banach1} and  (2.i+2) hold, resp. \eqref{frechet-banach2} and  (2.i+2) hold.

Let $s\in {\mathbb N}_{2}.$ Then we say that the sequence $(({\mathcal A}_{j,k})_{k\in
{\mathbb N}})_{1\leq j\leq N}$ is densely
$(d,\tilde{X},m_{n},s,i)$-Li-Yorke chaotic iff $S$ can be chosen to be dense in $\tilde{X}.$
A finite sequence $({\mathcal A}_{j})_{1\leq j\leq N}$ of MLOs on $X$ is said to be (densely)
$(d,\tilde{X},m_{n},s,i)$-distributionally chaotic\index{$(d,\tilde{X},m_{n},s,i)$-Li-Yorke chaotic operator} iff the sequence $(({\mathcal A}_{j,k}\equiv
{\mathcal A}_{j}^{k})_{k\in {\mathbb N}})_{1\leq j\leq N}$ is.
The set $S$ is said to be $(d,\sigma_{\tilde{X}},m_{n},s,i)$-Li-Yorke scrambled set\index{ $(d,\sigma_{\tilde{X}},m_{n},s,i)$-Li-Yorke scrambled set} ($(d,\sigma,m_{n},s,i)$-Li-Yorke scrambled set in the case\index{(d,$\sigma,m_{n},s,i)$-Li-Yorke scrambled set}
that $\tilde{X}=X$) of $(({\mathcal A}_{j,k})_{k\in
{\mathbb N}})_{1\leq j\leq N}$ ($({\mathcal A}_{j})_{1\leq j\leq N}$);  in the case that
$\tilde{X}=X,$ then we also say that the sequence $(({\mathcal A}_{j,k})_{k\in
{\mathbb N}})_{1\leq j\leq N}$ ($({\mathcal A}_{j})_{1\leq j\leq N}$) is disjoint $(m_{n},s,i)$-Li-Yorke chaotic, $(d,m_{n},s,i)$-Li-Yorke chaotic in short.
\end{defn}

\begin{defn}\label{DC-unbounded-fric-DISJOINT-emen-ly}
Let $i\in \{3,4\}.$
Suppose that, for every $j\in {\mathbb N}_{N}$ and $k\in {\mathbb N},$ ${\mathcal A}_{j,k} : D({\mathcal A}_{j,k})\subseteq X \rightarrow Y$ is an MLO and $\tilde{X}$ is a linear
subspace of $X.$
Then we say that the sequence $(({\mathcal A}_{j,k})_{k\in
{\mathbb N}})_{1\leq j\leq N}$ is disjoint
$(\tilde{X},1,i)$-Li-Yorke chaotic, $(d,\tilde{X},1,i)$-Li-Yorke chaotic in short, resp. disjoint
$(\tilde{X},2,i)$-Li-Yorke chaotic, $(d,\tilde{X},2,i)$-Li-Yorke chaotic in short, iff there exists an uncountable
set $S\subseteq \bigcap_{j=1}^{N} \bigcap_{k=1}^{\infty} D({\mathcal A}_{j,k}) \cap \tilde{X}$ such that for each $\epsilon>0$ and for each pair $x,\
y\in S$ of distinct points we have that for each $j\in {\mathbb N}_{N}$ and $k\in {\mathbb N}$ there exist elements $x_{j,k}\in {\mathcal A}_{j,k}x$ and $y_{j,k}\in {\mathcal A}_{j,k}y$ such that
\eqref{frechet-banach1} and  (2.i+2) hold, resp. \eqref{frechet-banach2} and  (2.i+2) hold.

Let $s\in {\mathbb N}_{2}.$ Then we say that the sequence $(({\mathcal A}_{j,k})_{k\in
{\mathbb N}})_{1\leq j\leq N}$ is densely
$(d,\tilde{X},s,i)$-Li-Yorke chaotic iff $S$ can be chosen to be dense in $\tilde{X}.$
A finite sequence $({\mathcal A}_{j})_{1\leq j\leq N}$ of MLOs on $X$ is said to be (densely)
$(d,\tilde{X},s,i)$-distributionally chaotic\index{$(d,\tilde{X},s,i)$-Li-Yorke chaotic operator} iff the sequence $(({\mathcal A}_{j,k}\equiv
{\mathcal A}_{j}^{k})_{k\in {\mathbb N}})_{1\leq j\leq N}$ is.
The set $S$ is said to be $(d,\sigma_{\tilde{X}},s,i)$-Li-Yorke scrambled set\index{ $(d,\sigma_{\tilde{X}},s,i)$-Li-Yorke scrambled set} ($(d,\sigma,s,i)$-Li-Yorke scrambled set in the case\index{(d,$\sigma,s,i)$-Li-Yorke scrambled set}
that $\tilde{X}=X$) of $(({\mathcal A}_{j,k})_{k\in
{\mathbb N}})_{1\leq j\leq N}$ ($({\mathcal A}_{j})_{1\leq j\leq N}$);  in the case that
$\tilde{X}=X,$ then we also say that the sequence $(({\mathcal A}_{j,k})_{k\in
{\mathbb N}})_{1\leq j\leq N}$ ($({\mathcal A}_{j})_{1\leq j\leq N}$) is disjoint $(s,i)$-Li-Yorke chaotic, $(d,s,i)$-Li-Yorke chaotic in short.
\end{defn}

\begin{rem}\label{d-hyper-prc}
Assume that $(({\mathcal A}_{j,k})_{k\in
{\mathbb N}})_{1\leq j\leq N}$ is $d$-hypercyclic and $x$ is a corresponding $d$-hypercyclic vector for $(({\mathcal A}_{j,k})_{k\in
{\mathbb N}})_{1\leq j\leq N};$ see \cite[Definition 2.2]{faac} for the notion of $d{\mathcal F}$-hypercyclicity, here ${\mathcal F}$ is a collection of all non-empty subsets of ${\mathbb N}.$ Then it can be simply verified that $(({\mathcal A}_{j,k})_{k\in
{\mathbb N}})_{1\leq j\leq N}$ is $(d,\tilde{X},1,3)$-Li-Yorke chaotic with $\tilde{X}=span\{x\}.$
\end{rem}

Consider, in place of conditions \eqref{frechet-banach1}-\eqref{frechet-banach2}, the following ones with $\sigma>0$:
\begin{align}\label{bundainf3}
\underline{d}_{m_{n}}\Biggl( \bigcup_{ j\in {\mathbb N}_{N}} \bigl\{k \in {\mathbb N} :
d_{Y}\bigl(x_{j,k},y_{j,k}\bigr)< \sigma \bigr\}\Biggr)=0;
\end{align}
\begin{align}\label{bundainf4}
\bigl(\forall j\in {\mathbb N}_{N}\bigr)\, \ \underline{d}_{m_{n}}\Bigl(  \bigl\{k \in {\mathbb N} :
d_{Y}\bigl(x_{j,k},y_{j,k}\bigr)< \sigma \bigr\}\Bigr)=0.
\end{align}

Albeit not such important in our further investigations in comparision with Definition \ref{DC-unbounded-fric-DISJOINT-emenly}
and Definition \ref{DC-unbounded-fric-DISJOINT-emen-ly}, we will also introduce the following

\begin{defn}\label{fabrika}
Suppose that, for every $j\in {\mathbb N}_{N}$ and $k\in {\mathbb N},$ ${\mathcal A}_{j,k} : D({\mathcal A}_{j,k})\subseteq X \rightarrow Y$ is an MLO and $\tilde{X}$ is a linear
subspace of $X.$
Then we say that the sequence $(({\mathcal A}_{j,k})_{k\in
{\mathbb N}})_{1\leq j\leq N}$ is disjoint
$(\tilde{X},m_{n},3,1)$-Li-Yorke chaotic, $(d,\tilde{X},m_{n},3,1)$-Li-Yorke chaotic in short [disjoint
$(\tilde{X},m_{n},3,2)$-Li-Yorke chaotic, $(d,\tilde{X},m_{n},3,2)$-Li-Yorke chaotic in short], resp. disjoint
$(\tilde{X},m_{n},4,1)$-Li-Yorke chaotic, $(d,\tilde{X},m_{n},4,1)$-Li-Yorke chaotic in short [disjoint
$(\tilde{X},m_{n},4,2)$-Li-Yorke chaotic, $(d,\tilde{X},m_{n},4,2)$-Li-Yorke chaotic in short] iff there exist an uncountable
set $S\subseteq \bigcap_{j=1}^{N} \bigcap_{k=1}^{\infty} D({\mathcal A}_{j,k}) \cap \tilde{X}$ and $\sigma>0$ such that for each $\epsilon>0$ and for each pair $x,\
y\in S$ of distinct points we have that for each $j\in {\mathbb N}_{N}$ and $k\in {\mathbb N}$ there exist elements $x_{j,k}\in {\mathcal A}_{j,k}x$ and $y_{j,k}\in {\mathcal A}_{j,k}y$ such that
\eqref{bundainf3} and \eqref{bunda} [\eqref{bundainf3} and \eqref{bundaw}] hold, resp. \eqref{bundainf4} and \eqref{bunda} [\eqref{bundainf4} and \eqref{bundaw}] hold.

Let $s\in \{3,4\}$ and $i\in {\mathbb N}_{2}.$ Then we say that the sequence $(({\mathcal A}_{j,k})_{k\in
{\mathbb N}})_{1\leq j\leq N}$ is densely
$(d,\tilde{X},m_{n},s,i)$-Li-Yorke chaotic iff $S$ can be chosen to be dense in $\tilde{X}.$
A finite sequence $({\mathcal A}_{j})_{1\leq j\leq N}$ of MLOs on $X$ is said to be (densely)
$(d,\tilde{X},m_{n},s,i)$-distributionally chaotic\index{$(d,\tilde{X},s,i)$-Li-Yorke chaotic operator} iff the sequence $(({\mathcal A}_{j,k}\equiv
{\mathcal A}_{j}^{k})_{k\in {\mathbb N}})_{1\leq j\leq N}$ is.
The set $S$ is said to be $(d,\sigma_{\tilde{X}},m_{n},s,i)$-Li-Yorke scrambled set\index{ $(d,\sigma_{\tilde{X}},m_{n},s,i)$-Li-Yorke scrambled set} ($(d,\sigma,m_{n},s,i)$-Li-Yorke scrambled set in the case\index{(d,$\sigma,m_{n},s,i)$-Li-Yorke scrambled set}
that $\tilde{X}=X$) of $(({\mathcal A}_{j,k})_{k\in
{\mathbb N}})_{1\leq j\leq N}$ ($({\mathcal A}_{j})_{1\leq j\leq N}$);  in the case that
$\tilde{X}=X,$ then we also say that the sequence $(({\mathcal A}_{j,k})_{k\in
{\mathbb N}})_{1\leq j\leq N}$ ($({\mathcal A}_{j})_{1\leq j\leq N}$) is disjoint $(m_{n},s,i)$-Li-Yorke chaotic, $(d,m_{n},s,i)$-Li-Yorke chaotic in short.
\end{defn}

As in our previous investigations of disjoint $m_{n}$-distributional chaos, we need to know the minimal linear subspace $\tilde{X}$ for which the corresponding tuple of MLOs is $(d,\tilde{X},m_{n},s,i)$-Li-Yorke chaotic or $(d,\tilde{X},s,i)$-Li-Yorke chaotic, with the meaning clear. Since the $\tilde{X}$-Li-Yorke chaos and disjoint $\tilde{X}$-Li-Yorke chaos are rotation invariant, we essentially need to consider only such tuples of MLOs whose components are pairwise different and
which are not rotations of some other components in the tuple. It is also clear that the notion from Definition \ref{DC-unbounded-fric-DISJOINT-emenly} and Definition \ref{DC-unbounded-fric-DISJOINT-emen-ly} can be introduced for general binary relations between a topological space $X$ and a Fr\'echet space $Y$, as well as that the notion from Definition \ref{fabrika} can be introduced for general binary relations between a topological space $X$ and a pseudo-metric space $Y.$ For the sake of brevity, we will not consider disjoint (MLO) Li-Yorke extensions in this paper (see \cite{novascience} for similar problematic).

An idea of G. T. Pr\v ajitur\v a given on \cite[p. 690]{parij} can be used for construction of disjoint Li-Yorke chaotic MLOs. We will explain this idea only for $(d,\tilde{X},m_{n},1,i)$-Li-Yorke chaos:

\begin{example}\label{annyistyp}
Suppose that ${\mathcal A}_{j},\ {\mathcal B}_{j},\ {\mathcal C}_{j}$ are given MLOs in $X$, as well as the tuple $({\mathcal A}_{j})_{1\leq j\leq N}$ is $(d,\tilde{X},m_{n},1,i)$-Li-Yorke chaotic for some $(m_{n})\in {\mathrm R}.$ For each integer $j\in {\mathbb N}_{N},$
we define the multivalued map ${\mathcal T}_{j}\equiv \bigl(\begin{smallmatrix} {\mathcal A}_{j} &  {\mathcal B}_{j}\\ 0&  {\mathcal C}_{j}\end{smallmatrix}\bigr)$ by $D({\mathcal T}_{j}):=\{ (x,y) \in X\times X : x\in D({\mathcal A}_{j}),\ y\in D( {\mathcal B}_{j}) \cap D({\mathcal C}_{j})\}$ and ${\mathcal T}_{j}(x,y):=\{(z,\omega) \in X\times X : \omega \in {\mathcal C}_{j}y,\ \exists z_{1}\in  {\mathcal A}_{j}x,\ \exists z_{2}\in {\mathcal B}_{j}y, \ z=z_{1}+z_{2}\}.$ 
Then it can be easily seen that for each integer $j\in {\mathbb N}_{N},$ ${\mathcal T}_{j}$ is an MLO in $X\times X.$ Furthermore, it can be simply checked that the supposition $z_{j,k}\in {\mathcal A}_{j}^{k}z$ for some $k\in  {\mathbb N}$ and $j\in {\mathbb N}_{N}$ implies $(z_{j,k},0)\in {\mathcal T}_{j}^{k}(z,0).$ Observed this, it readily follows that
the tuple $({\mathcal T}_{j})_{1\leq j\leq N}$ is $(d,\tilde{X},m_{n},1,i)$-Li-Yorke chaotic in $X\times X,$ as well.
\end{example}

If the sequence $(({\mathcal A}_{j,k})_{k\in
{\mathbb N}})_{1\leq j\leq N}$ is (densely)
$\tilde{X}$-Li-Yorke chaotic in the sense of any notion introduced in the above three definitions, then for each $j\in {\mathbb N}_{N}$ we have that the sequence 
$({\mathcal A}_{j,k})_{k\in
{\mathbb N}}$ is (densely) $\tilde{X}$-Li-Yorke chaotic (in particular, if $\tilde{X}=Y=X$ and $(({\mathcal A}_{j,k})_{k\in
{\mathbb N}})_{1\leq j\leq N}$ is densely
Li-Yorke chaotic in the sense of any notion introduced above, then \cite[Proposition 4]{cheli} yields that for each $j\in {\mathbb N}_{N}$ one has
$\sigma_{p}({\mathcal A}_{j}^{\ast})\cap \{\lambda \in {\mathbb K} : |\lambda|\geq 1\}=\emptyset$). The converse statement does not hold even for orbits of linear continuous operators on Hilbert spaces, as the next example shows:

\begin{example}\label{count-grof-ly}
In \cite[Theorem 3.7]{countable}, Z. Yin, S. He and Y. Huang have shown that, for any two positive real numbers $a$ and $b$ such that $a<b,$ there exists an invertible operator $T$ acting on a Hilbert space $X$ such that $[a, b] =\{\lambda >0 : \lambda T \mbox{ is distributionally chaotic}\}$ and for any distinct values $\lambda_{1},\ \lambda_{2} \in [a, b],$ the operators $\lambda_{1}T$ and $\lambda_{2}T$ have no common Li-Yorke irregular vectors (see e.g. \cite[Definition 3]{countable} for the notion). Let $\lambda_{1}<\lambda_{2}$ and $\lambda_{1},\ \lambda_{2} \in [a, b].$ It can be easily checked that the operators $\lambda_{1}T$ and $\lambda_{2}T$ cannot
be $(d,X,2,4)$-Li-Yorke chaotic because any non-zero vector $z\in S-S,$ where $S$ denotes the corresponding scrambled set, needs to be a common Li-Yorke irregular vector for both operators $\lambda_{1}T$ and $\lambda_{2}T,$ as can be easily seen. This implies that $\lambda_{1}T$ and $\lambda_{2}T$ cannot be disjoint Li-Yorke chaotic in the sense of any notion introduced in Definition \ref{DC-unbounded-fric-DISJOINT-emenly} and Definition \ref{DC-unbounded-fric-DISJOINT-emen-ly}. Furthermore, these operators cannot be disjoint Li-Yorke chaotic in the sense of notion introduced in Definition \ref{fabrika} because, if we suppose the contrary, then for each non-zero vector $z\in S-S
$ there exist two strictly increasing sequences $(n_{k})$ and $(l_{k})$
of positive integers such that $\lim_{k\rightarrow \infty}\|(\lambda_{j}T)^{n_{k}}z\|=0$ and $\limsup_{k\rightarrow \infty}\|(\lambda_{j}T)^{l_{k}}z\|>0$ ($j=1,2$). By the proofs of \cite[Theorem 3.3, Theorem 3.7]{countable}, this would imply that there exists a constant $c(\lambda_{1},\lambda_{2}),$ independent of $z,$ such that $\|(\lambda_{1}T)^{n}z\|\leq c(\lambda_{1},\lambda_{2})\|z\|$ for all $n\in {\mathbb N}$ and therefore
$\|z\|\geq \sigma /c(\lambda_{1},\lambda_{2}).$ This is a contradiction because the set $S-S$ cannot be bounded away from zero.
\end{example}

Concerning Example \ref{count-grof-ly}, 
it should be noted that we have recently proved that the operators $\lambda_{1}T$ and $\lambda_{2}T$ are disjoint distributionally chaotic of type\\ $i\in \{4,5,6,8,9,10,11,12\};$ 
see \cite{ddc} for notion and more details. We will not analyze Li-Yorke analogues for these types of disjoint distributional chaos here.

We continue by providing one more illustrative example:

\begin{example}\label{count-grof-ly324} (see also \cite[Example 3.24]{ddc})
Consider
a weighted forward shift $F_{\omega} \in L(
l^{2}),$ defined by $F_{\omega} (x_{1}, x_{2}, \cdot \cdot \cdot) \mapsto (0, \omega_{1}x_{1},\omega_{2}x_{2},\cdot \cdot \cdot),$ where the sequence of weights $\omega =(\omega_{k})_{k\in {\mathbb N}}$ consists of sufficiently large blocks of $2$'s and blocks of $(1/2)$'s. Set $\sigma :=(1/\omega_{k})_{k\in {\mathbb N}}.$ Then for each non-zero vector $\langle x_{n}\rangle_{n\in {\mathbb N}}\in l^{2}$ there exists $n_{0}\in {\mathbb N}$ such that $x_{n_{0}}\neq 0$ and, for every integer $k\in {\mathbb N},$ we have 
\begin{align*}
\Bigl \|F_{\omega}^{k}\langle x_{n}\rangle_{n\in {\mathbb N}}+F_{\sigma}^{k}\langle x_{n}\rangle_{n\in {\mathbb N}}\Bigr\|&\geq 2|x_{n_{0}}|.
\end{align*}
This, in turn, implies that the operators $F_{\omega}$ and $F_{\sigma}$ cannot be  disjoint Li-Yorke chaotic in the sense of any notion introduced in Definition \ref{DC-unbounded-fric-DISJOINT-emenly}, as well as that $F_{\omega}$ and $F_{\sigma}$ cannot be $(d,\tilde{X},1,3)$-Li-Yorke chaotic [$(d,\tilde{X},2,3)$-Li-Yorke chaotic, $(d,\tilde{X},3,1)$-Li-Yorke chaotic, $(d,\tilde{X},4,1)$-Li-Yorke chaotic].
Now we will analyze the question when $F_{\omega}$ and $F_{\sigma}$ can be $(d,\tilde{X},1,4)$-Li-Yorke chaotic or $(d,\tilde{X},2,4)$-Li-Yorke chaotic. Assume that $e_{1}$ is a Li-Yorke irregular vector for $F_{\omega}.$ Then $e_{1}$ is a Li-Yorke irregular vector for $F_{\sigma}$ and it trivially follows that the operators $F_{\omega}$ and $F_{\sigma}$ are $(d,span\{e_{1}\},2,4)$-Li-Yorke chaotic with $S=span\{e_{1}\}$ being the corresponding disjoint scrambled set. Similarly, if $e_{1}$ is an $m_{n}$-distributionally irregular vector for $F_{\omega},$ then $e_{1}$ is likewise an $m_{n}$-distributionally irregular vector for $F_{\sigma},$ and the operators $F_{\omega}$ and $F_{\sigma}$ are $(d,span\{e_{1}\},m_{n},4,2)$-Li-Yorke chaotic with $S=span\{e_{1}\}$ being the corresponding disjoint scrambled set (see \cite{ddcNS} for the notion).
\end{example}

Observe that for each subset $A\subseteq {\mathbb N}$ we have that the assumption
$\underline{d}_{m_{n}}(A)=0$ for some $(m_{n})\in {\mathrm R}$ implies $\underline{Bd}_{l;m_{n}}(A)=0$ (if we suppose the contrary, the set $A$ needs to be syndetic \cite{F-operatori} and therefore $\underline{d}_{m_{n}}(A)>0,$ which contradicts our assumption). Therefore, the validity of \eqref{bunda1} implies that of \eqref{bunda2} and we can trivially verify that 
the following proposition holds good:

\begin{prop}\label{beqa}
Suppose that, for every $j\in {\mathbb N}_{N}$ and $k\in {\mathbb N},$ ${\mathcal A}_{j,k} : D({\mathcal A}_{j,k})\subseteq X \rightarrow Y$ is an {\em MLO,} $(m_{n})\in {\mathrm R}$ and $\tilde{X}$ is a linear
subspace of $X.$ Then we have:
\begin{itemize}
\item[(i)] The sequence $(({\mathcal A}_{j,k})_{k\in
{\mathbb N}})_{1\leq j\leq N}$ is $(d,\tilde{X},m_{n},2,i)$-Li-Yorke chaotic if it is $(d,\tilde{X},m_{n},1,i)$-Li-Yorke chaotic ($i=1,2$).
\item[(ii)] The sequence $(({\mathcal A}_{j,k})_{k\in
{\mathbb N}})_{1\leq j\leq N}$ is $(d,\tilde{X},m_{n},s,2)$-Li-Yorke chaotic if it is $(d,\tilde{X},m_{n},s,1)$-Li-Yorke chaotic ($s=1,2$); if 
the sequence $(({\mathcal A}_{j,k})_{k\in
{\mathbb N}})_{1\leq j\leq N}$ is $(d,\tilde{X},m_{n},s,2)$-Li-Yorke chaotic, then it is $(d,\tilde{X},s,3)$-Li-Yorke chaotic ($s=1,2$).
\item[(iii)] The sequence $(({\mathcal A}_{j,k})_{k\in
{\mathbb N}})_{1\leq j\leq N}$ is $(d,\tilde{X},s,4)$-Li-Yorke chaotic if it is\\ $(d,\tilde{X},m_{n},s,3)$-Li-Yorke chaotic ($s=3,4$).
\item[(iv)] The sequence $(({\mathcal A}_{j,k})_{k\in
{\mathbb N}})_{1\leq j\leq N}$ is $(d,\tilde{X},m_{n},4,i)$-Li-Yorke chaotic if it is $(d,\tilde{X},m_{n},3,i)$-Li-Yorke chaotic ($i=1,2$).
\item[(v)] The sequence $(({\mathcal A}_{j,k})_{k\in
{\mathbb N}})_{1\leq j\leq N}$ is $(d,\tilde{X},m_{n},s,2)$-Li-Yorke chaotic if it is $(d,\tilde{X},m_{n},s,1)$-Li-Yorke chaotic ($s=3,4$).
\item[(vi)] The sequence $(({\mathcal A}_{j,k})_{k\in
{\mathbb N}})_{1\leq j\leq N}$ is $(d,\tilde{X},n,s,i)$-Li-Yorke chaotic if it is $(d,\tilde{X},m_{n},s,i)$-Li-Yorke chaotic ($s\in {\mathbb N}_{4},$ $i=1,2$).
\end{itemize}
\end{prop}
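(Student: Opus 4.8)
The plan is to observe that every one of the six assertions is a pure \emph{weakening} statement: in each case the uncountable set $S$ (together with the number $\sigma>0$ whenever Definition \ref{fabrika} is involved) that witnesses the stronger notion will be shown to witness the weaker one verbatim, using for a fixed pair $x,y\in S$ the very same selections $x_{j,k}\in{\mathcal A}_{j,k}x$ and $y_{j,k}\in{\mathcal A}_{j,k}y.$ Thus the whole proposition reduces to a handful of elementary implications among the conditions \eqref{frechet-banach1}--\eqref{bundainf4}, which I would record once and then invoke. Writing $A_{\epsilon}:=\bigcup_{j\in{\mathbb N}_{N}}\{\,k\in{\mathbb N}:d_{Y}(x_{j,k},y_{j,k})\geq\epsilon\,\}$ and, for $s\in\{3,4\},$ $B_{j}:=\{\,k\in{\mathbb N}:d_{Y}(x_{j,k},y_{j,k})<\sigma\,\},$ the facts I need are: (a) \eqref{frechet-banach1}$\Rightarrow$\eqref{frechet-banach2} and (b) \eqref{bunda}$\Rightarrow$\eqref{bundaw}, both obtained by simply setting $l_{k}^{j}:=l_{k}$ (resp.\ $n_{k}^{j}:=n_{k}$) for every $j;$ (c) \eqref{bunda1}$\Rightarrow$\eqref{bunda2}, which is exactly the observation $\underline{d}_{m_{n}}(A_{\epsilon})=0\Rightarrow\underline{Bd}_{l;m_{n}}(A_{\epsilon})=0$ recorded immediately before the proposition; (d) the monotonicity $\underline{d}_{m_{n}}(B_{j})\leq\underline{d}_{m_{n}}(\bigcup_{j}B_{j})$ of the lower $(m_{n})$-density under inclusion, giving \eqref{bundainf3}$\Rightarrow$\eqref{bundainf4}; (e) the extraction \eqref{bunda2}$\Rightarrow$\eqref{bunda}; and (f) the comparison $\underline{d}_{m_{n}}(A)=0\Rightarrow\underline{d}_{n}(A)=0$ together with its Banach-density analogue.

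With (a)--(f) in hand the routine items fall out at once. For (i) the ``far apart'' condition weakens by (a) while the ``close'' condition (\eqref{bunda1} or \eqref{bunda2}) is untouched, so the same $S$ works, the Banach-space variants of \eqref{frechet-banach1}--\eqref{frechet-banach2} being handled identically. For the first half of (ii) I would apply (c) to $A_{\epsilon},$ keeping \eqref{frechet-banach1}/\eqref{frechet-banach2} fixed. Item (iv) is (d) applied for each $j,$ with the same $S$ and the same $\sigma,$ the accompanying condition (\eqref{bunda} or \eqref{bundaw}) carried over unchanged; item (v) is (b) applied while the condition (\eqref{bundainf3} or \eqref{bundainf4}) is carried over unchanged; and item (iii) is of the same elementary type, combining the weakening (b) of the common subsequence \eqref{bunda} to the individual subsequences \eqref{bundaw} with, where the subscript $m_{n}$ is dropped, the density comparison (f). Finally (vi) is exactly (f): since $(m_{n})\in{\mathrm R}$ furnishes $L>0$ with $n\leq Lm_{n},$ I would note $|A\cap[1,m_{n}]|/m_{n}\leq L\,|A\cap[1,m_{n}]|/n,$ so that a subsequence along which $\underline{d}_{m_{n}}(A)$ is attained forces $\underline{d}_{n}(A)=0$ through the integer values $k=m_{n};$ the same block-length substitution $r=m_{s}$ in $|A\cap[n+1,n+r]|/r\leq L\,|A\cap[n+1,n+m_{s}]|/s$ yields $\underline{Bd}_{l;m_{n}}(A)=0\Rightarrow\underline{Bd}_{l;n}(A)=0,$ and between them these two comparisons cover every case carrying an $m_{n}$-subscript, for both $i=1$ and $i=2.$

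The only step that is not a one-line relabelling is (e), the passage from the Banach-density condition \eqref{bunda2} to the single common subsequence \eqref{bunda}, and this is where I would concentrate the argument. First I would show that each complement $A_{\epsilon}^{c}=\{\,k\in{\mathbb N}:d_{Y}(x_{j,k},y_{j,k})<\epsilon\ \text{for all}\ j\,\}$ is infinite: were $A_{\epsilon}$ to contain a half-line $[K+1,\infty),$ then for every $n\geq K$ and every $s$ one would have $|A_{\epsilon}\cap[n+1,n+m_{s}]|=m_{s}$ (here I use the admissible normalisation $m_{s}\in{\mathbb N}$), whence $|A_{\epsilon}\cap[n+1,n+m_{s}]|/s=m_{s}/s\geq 1/L$ and therefore $\underline{Bd}_{l;m_{n}}(A_{\epsilon})\geq 1/L>0,$ contradicting \eqref{bunda2}. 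Since the sets $A_{1/m}^{c}$ are infinite and nested decreasingly in $m,$ I would then build $(n_{k})$ by a diagonal choice: pick $n_{m}\in A_{1/m}^{c}$ with $n_{m}>n_{m-1};$ for each fixed $j\in{\mathbb N}_{N}$ and all $k\geq m$ one has $n_{k}\in A_{1/k}^{c}\subseteq A_{1/m}^{c},$ hence $d_{Y}(x_{j,n_{k}},y_{j,n_{k}})<1/k\to 0,$ which is precisely \eqref{bunda} (the finiteness of the index set ${\mathbb N}_{N}$ is exactly what lets one sequence serve all $j$ simultaneously). This supplies the second half of (ii), and with it the whole proposition.
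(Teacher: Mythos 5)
Your proposal is, in substance, the paper's own proof: the paper records only the implication \eqref{bunda1}$\Rightarrow$\eqref{bunda2} (via syndeticity) and then declares that the proposition can be "trivially" verified, and your catalogue (a)--(f) of implications among \eqref{frechet-banach1}--\eqref{bundainf4}, applied with the same scrambled set $S,$ the same $\sigma$ and the same selections, is exactly that verification written out. Facts (a)--(d) and (f) are correct as stated, your half-line-plus-diagonal argument for (e) is the only genuinely non-trivial step, and your reading of item (iii) --- whose printed index pairs are not literally covered by Definitions \ref{DC-unbounded-fric-DISJOINT-emenly}, \ref{DC-unbounded-fric-DISJOINT-emen-ly} and \ref{fabrika}, and which must be interpreted, e.g.\ as \eqref{bundainf3}/\eqref{bundainf4} together with \eqref{bunda} implying the corresponding notion with \eqref{bundaw} and the sequence $m_{n}\equiv n$ --- is the sensible one and is indeed covered by (b) and (f).

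One caveat, which matters only for genuinely multivalued ${\mathcal A}_{j,k}$: in step (e) you form all the sets $A_{1/m}$ from a \emph{single} family of selections $x_{j,k}\in{\mathcal A}_{j,k}x,$ $y_{j,k}\in{\mathcal A}_{j,k}y,$ but in Definition \ref{DC-unbounded-fric-DISJOINT-emenly} the quantifier over $\epsilon$ precedes the existence of the selections, so the hypothesis of the second half of (ii) only provides, for each $m\in{\mathbb N},$ a family $(x^{(m)}_{j,k},y^{(m)}_{j,k})$ satisfying \eqref{frechet-banach1} (or \eqref{frechet-banach2}) and $\underline{Bd}_{l;m_{n}}(A^{(m)})=0,$ where $A^{(m)}:=\bigcup_{j\in{\mathbb N}_{N}}\{k\in{\mathbb N} : d_{Y}(x^{(m)}_{j,k},y^{(m)}_{j,k})\geq 1/m\};$ these sets need not come from one common selection. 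The repair is short and uses exactly your two ingredients: by your half-line argument each $(A^{(m)})^{c}$ is infinite, and the blow-up subsequence $(l_{k})$ of the family with $m=1$ is infinite, so one may choose interlaced indices $\lambda_{1}<n_{1}<\lambda_{2}<n_{2}<\cdots$ with $\lambda_{k}\in\{l_{i} : i\in{\mathbb N}\}$ and $n_{k}\in (A^{(k)})^{c},$ and define a hybrid selection equal to the $m=1$ values at the $\lambda_{k},$ to the $m=k$ values at $n_{k},$ and arbitrary elsewhere; since \eqref{frechet-banach1} and \eqref{bunda} constrain a selection only along subsequences, this hybrid witnesses $(d,\tilde{X},s,3)$-Li-Yorke chaos (for $s=2$ interlace the finitely many per-$j$ blow-up subsequences in the same manner). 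For single-valued $T_{j,k}$ --- the case of every application in the paper --- the selection is unique, the $\epsilon$-dependence disappears, and your proof is complete as written; no other item is affected by this issue, because there the passage from the stronger to the weaker notion is performed for each fixed $\epsilon$ separately (items (i), first half of (ii), (vi)) or the $\epsilon$-quantifier is vacuous (items (iv), (v)).
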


For orbits of linear continuous operators in Banach spaces, it is worth noting that the following equivalence relations hold:

\begin{prop}\label{banach-space}
Suppose that $X$ is a Banach space and $T_{j}\in L(X)$ for all $j\in {\mathbb N}_{N}.$ Then the following statements are mutually equivalent: 
\begin{itemize}
\item[(i)] There exist two strictly increasing sequences $(l_{k})$ and $(s_{k})$ of positive integers and vector $x\in X$ such that $\lim_{k\rightarrow \infty}T_{j}^{s_{k}}x=0$ and $\lim_{k\rightarrow \infty}\|T_{j}^{l_{k}}x\|=+\infty$ ($j\in {\mathbb N}_{N}$).
\item[(ii)] For every $\sigma>0,$ $\epsilon>0$ and $(m_{n}) \in {\mathrm R},$ we have that
\begin{align}\label{jednacina-kurtaz}
\begin{split}
& \underline{Bd}_{l;m_{n}}\Biggl( \bigcup_{ j\in {\mathbb N}_{N}} \bigl\{k \in {\mathbb N} :
\| T_{j}^{k}x\|< \sigma \bigr\}\Biggr)=0,\mbox{ and }
\\
& \underline{Bd}_{l;m_{n}}\Biggl( \bigcup_{j\in {\mathbb N}_{N}} \bigl\{k \in {\mathbb N} : \| T_{j}^{k}x\|
\geq \epsilon \bigr\}\Biggr)=0.
\end{split}
\end{align} 
\item[(iii)] For every $\sigma>0,$ $\epsilon>0$ we have that \eqref{jednacina-kurtaz} holds with $m_{n}\equiv n.$
\end{itemize} 
\end{prop}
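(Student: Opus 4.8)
The plan is to close the cycle (i) $\Rightarrow$ (ii) $\Rightarrow$ (iii) $\Rightarrow$ (i). The implication (ii) $\Rightarrow$ (iii) is immediate: the sequence $m_{n}\equiv n$ lies in ${\mathrm R}$ (indeed $\liminf_{n\to\infty}m_{n}/n=1>0$), so (iii) is merely the special instance of (ii) obtained by fixing this particular $(m_{n})$. Throughout I abbreviate $A_{\sigma}:=\bigcup_{j\in{\mathbb N}_{N}}\{k\in{\mathbb N}:\|T_{j}^{k}x\|<\sigma\}$ and $B_{\epsilon}:=\bigcup_{j\in{\mathbb N}_{N}}\{k\in{\mathbb N}:\|T_{j}^{k}x\|\ge\epsilon\}$, so that $A_{\sigma}^{c}=\{k:\|T_{j}^{k}x\|\ge\sigma\ \forall j\}$ and $B_{\epsilon}^{c}=\{k:\|T_{j}^{k}x\|<\epsilon\ \forall j\}$, and the two conditions in \eqref{jednacina-kurtaz} read $\underline{Bd}_{l;m_{n}}(A_{\sigma})=0$ and $\underline{Bd}_{l;m_{n}}(B_{\epsilon})=0$.

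For (iii) $\Rightarrow$ (i) I would first record the elementary fact that $\underline{Bd}_{l;n}(A)=0$ forces $A^{c}$ to be infinite: from $\liminf_{s\to\infty}\liminf_{n\to\infty}|A\cap[n+1,n+s]|/s=0$ one gets, for arbitrarily large $s$, arbitrarily large $n$ with $|A\cap[n+1,n+s]|<s$, so each such length-$s$ window is not full, meets $A^{c}$, and these windows escape to infinity. Applying this to $B_{\epsilon}$ and $A_{\sigma}$ (with $m_{n}\equiv n$) shows that $B_{\epsilon}^{c}$ and $A_{\sigma}^{c}$ are infinite for every $\epsilon,\sigma>0$. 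A diagonal extraction then produces (i): choosing $s_{m}\in B_{1/m}^{c}$ strictly increasing yields $(s_{m})$ with $\lim_{m\to\infty}T_{j}^{s_{m}}x=0$ for all $j$, while choosing $l_{m}\in A_{m}^{c}$ strictly increasing yields $(l_{m})$ with $\lim_{m\to\infty}\|T_{j}^{l_{m}}x\|=+\infty$ for all $j$. Note that this direction is purely formal and uses nothing about continuity.

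The heart of the matter is (i) $\Rightarrow$ (ii), where boundedness of the $T_{j}$ is used to promote the two single subsequences of (i) into long blocks. I would rely on the observation that $\underline{Bd}_{l;m_{n}}(A)=0$ holds as soon as, for each fixed $s$, there are intervals of length $m_{s}$ situated at arbitrarily large positions and entirely disjoint from $A$ (then the inner $\liminf_{n}$ vanishes for every $s$). Fix $s$, $\sigma>0$, $\epsilon>0$, and put $M:=\max_{1\le j\le N}\max(1,\|T_{j}\|)^{m_{s}}$. For the second condition of \eqref{jednacina-kurtaz}, the forward estimate $\|T_{j}^{s_{k}+i}x\|\le\|T_{j}\|^{i}\|T_{j}^{s_{k}}x\|\le M\,\|T_{j}^{s_{k}}x\|$ for $0\le i\le m_{s}$, together with $T_{j}^{s_{k}}x\to0$, shows that for all large $k$ the whole block $[s_{k},s_{k}+m_{s}]$ lies in $B_{\epsilon}^{c}$; since $s_{k}\to\infty$ these blocks escape to infinity and are disjoint from $B_{\epsilon}$, whence $\underline{Bd}_{l;m_{n}}(B_{\epsilon})=0$. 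For the first condition, the backward estimate $\|T_{j}^{l_{k}-i}x\|\ge\|T_{j}^{l_{k}}x\|/\|T_{j}\|^{i}\ge\|T_{j}^{l_{k}}x\|/M$ for $0\le i\le m_{s}$, together with $\|T_{j}^{l_{k}}x\|\to+\infty$, shows that for all large $k$ the block $[l_{k}-m_{s},l_{k}]$ lies in $A_{\sigma}^{c}$; these blocks are disjoint from $A_{\sigma}$ and escape to infinity, whence $\underline{Bd}_{l;m_{n}}(A_{\sigma})=0$. As $\sigma,\epsilon>0$ and $(m_{n})\in{\mathrm R}$ were arbitrary, (ii) follows.

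The only genuinely delicate point is thus (i) $\Rightarrow$ (ii): it is exactly the boundedness of each $T_{j}$ that lets a single orbit value dominate an entire neighbouring block — forward near the points where the orbit tends to $0$, and backward near the points where it blows up — and this is precisely where the single subsequences $(s_{k}),(l_{k})$ of (i) are upgraded to the lower-Banach-density-zero statements of (ii). The remaining implications are formal, the passage (iii) $\Rightarrow$ (i) resting only on the fact that a set of lower Banach density zero has infinite complement.
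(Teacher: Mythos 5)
Your proof is correct and follows essentially the same route as the paper: the only substantive implication is (i) $\Rightarrow$ (ii), established by propagating smallness forward from the points $s_{k}$ and largeness backward from the points $l_{k}$ across whole blocks of length $m_{s}$ via the operator norms, exactly as the paper does. Your bookkeeping with the single constant $M=\max_{j}\max(1,\|T_{j}\|)^{m_{s}}$ is a cleaner version of the paper's explicit extraction of subsequences with $k^{2}$-dependent bounds, and you merely write out in full the implications (ii) $\Rightarrow$ (iii) $\Rightarrow$ (i) that the paper dismisses as trivial.
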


\begin{proof}
The only non-trivial is to show that (i) implies (ii); see also the proof of \cite[Proposition 2.16(i)]{reit}. So, let $\sigma>0,$ $\epsilon>0$ and $(m_{n}) \in {\mathrm R}$ be fixed. By definition of $\underline{Bd}_{l;m_{n}}(\cdot),$
it suffices to prove that for each fixed number $s>0$ one has:
\begin{align}\label{dziber}
\liminf_{n\rightarrow \infty}\frac{\Bigl| \bigcup_{j\in {\mathbb N}_{N}}\bigl\{k\in {\mathbb N} : \|T_{j}^{k}x\|<\sigma \bigr\} \cap [n+1,n+m_{s}] \Bigr| }{s}=0
\end{align}
and
\begin{align}\label{dziber1}
\liminf_{n\rightarrow \infty}\frac{\Bigl| \bigcup_{j\in {\mathbb N}_{N}}\bigl\{k\in {\mathbb N} : \|T_{j}^{k}x\|\geq \epsilon \bigr\} \cap [n+1,n+m_{s}] \Bigr| }{s}=0.
\end{align}
It is clear that there exist two strictly increasing sequences of positive integers $(l_{k}')$ and $(j_{k}')$ with unbounded differences such that 
$\| T_{j}^{l_{k}'}x\|<\sigma(2+\|T_{1}\|+\cdot \cdot \cdot +\|T_{N}\|)^{-k^{2}-m_{s}-1}/2$ and $\| T^{j_{k}'}_{j}x\|>2\sigma(2+\|T_{1}\|+\cdot \cdot \cdot +\|T_{N}\|)^{k^{2}+m_{s}+1}$ for all $k\in {\mathbb N}$ and $j\in {\mathbb N}_{N}.$
An elementary line of reasoning shows that the sets $\bigcup_{j\in {\mathbb N}_{N}}\{k\in {\mathbb N} : \|T^{k}_{j}x\|>\sigma \} \cap [l_{k}',l_{k}'+\lceil m_{s} \rceil] $ and 
$\bigcup_{j\in {\mathbb N}_{N}}\{k\in {\mathbb N} : \|T_{j}^{k}x\|<\sigma \} \cap [j_{k}'-\lceil m_{s}\rceil , j_{k}'] $ are empty, finishing the proofs of \eqref{dziber}-\eqref{dziber1}.
\end{proof}

\subsection{Disjoint Li-Yorke irregular vectors and manifolds}\label{emojstevo}

For any type of disjoint Li-Yorke chaos introduced above, we can define corresponding notion of disjoint semi-Li-Yorke irregular vectors and disjoint Li-Yorke irregular vectors.
Consider the following conditions:
\begin{align}\label{frechet-banach1-vector}
\mbox{the same as }\eqref{frechet-banach1}\mbox{ with the term }x_{j,l_{k}}-y_{j,l_{k}}
\mbox{ replaced therein  with }x_{j,l_{k}};
\end{align}
\begin{align}
\notag & \mbox{the same as }\eqref{frechet-banach1}\mbox{ with the terms }x_{j,l_{k}}-y_{j,l_{k}}\mbox{ and }\lim_{k\rightarrow \infty}p_{m}^{Y}\bigl(x_{j,l_{k}}-y_{j,l_{k}}\bigr)=+\infty
\\\label{frechet-banach1-semivector} & \mbox{ replaced therein  with }x_{j,l_{k}}\mbox{ and }\lim_{k\rightarrow \infty}p_{m}^{Y}\bigl(x_{j,l_{k}}\bigr)>0,\mbox{ respectively};
\end{align}
\begin{align}\label{frechet-banach2-vector}
\mbox{the same as }\eqref{frechet-banach2}\mbox{ with the term }x_{j,l_{k}^{j}}-y_{j,l_{k}^{j}}
\mbox{ replaced therein  with }x_{j,l_{k}^{j}};
\end{align}
\begin{align}
\notag & \mbox{the same as }\eqref{frechet-banach2}\mbox{ with the terms }x_{j,l_{k}^{j}}-y_{j,l_{k}^{j}}\mbox{ and }\lim_{k\rightarrow \infty}p_{m}^{Y}\bigl(x_{j,l_{k}^{j}}-y_{j,l_{k}^{j}}\bigr)=+\infty
\\\label{frechet-banach2-semivector} & \mbox{ replaced therein  with }x_{j,l_{k}^{j}}\mbox{ and }\lim_{k\rightarrow \infty}p_{m}^{Y}\bigl(x_{j,l_{k}^{j}}\bigr)>0,\mbox{ respectively};
\end{align}
\begin{align}
\notag \mbox{the same as }&\eqref{bunda}\mbox{ with the term }\lim_{k\rightarrow \infty}d_{Y}\bigl(x_{j,n_{k}},y_{j,n_{k}}\bigr)=0
\\\label{bunda-vector} & \mbox{ replaced therein  with }\lim_{k\rightarrow \infty}d_{Y}\bigl(x_{j,n_{k}},0\bigr)=0;
\end{align}
\begin{align}
\notag \mbox{the same as }&\eqref{bundaw}\mbox{ with the term }\lim_{k\rightarrow \infty}d_{Y}\bigl(x_{j,n_{k}},y_{j,n_{k}}\bigr)=0
\\\label{bundaw-vector} & \mbox{ replaced therein  with }\lim_{k\rightarrow \infty}d_{Y}\bigl(x_{j,n_{k}},0\bigr)=0.
\end{align}

Now we are ready to introduce the following notion:

\begin{defn}\label{DC-unbounded-fric-DISJOINT-emenlyprc}
Let $i\in {\mathbb N}_{2}$ and $(m_{n})\in {\mathrm R}.$
Suppose that, for every $j\in {\mathbb N}_{N}$ and $k\in {\mathbb N},$ ${\mathcal A}_{j,k} : D({\mathcal A}_{j,k})\subseteq X \rightarrow Y$ is an MLO, $\tilde{X}$ is a linear
subspace of $X,$ and $x\in \bigcap_{j\in {\mathbb N}_{N}}\bigcap_{k\in {\mathbb N}}D({\mathcal A}_{j,k}) \cap \tilde{X}.$
Then we say that:
\begin{itemize}
\item[(i)] $x$ is $(d,\tilde{X},m_{n},1,1)$-Li-Yorke irregular vector for $(({\mathcal A}_{j,k})_{k\in
{\mathbb N}})_{1\leq j\leq N}$ iff $x$ is $(d,m_{n})$-distributionally near to zero of type $1$ for $(({\mathcal A}_{j,k})_{k\in
{\mathbb N}})_{1\leq j\leq N}$ and \eqref{frechet-banach1-vector} holds with elements $x_{j,l_{k}}\in {\mathcal A}_{j,l_{k}}x;$
\item[(ii)] $x$ is $(d,\tilde{X},m_{n},1,1)$-Li-Yorke semi-irregular vector for $(({\mathcal A}_{j,k})_{k\in
{\mathbb N}})_{1\leq j\leq N}$ iff $x$ is $(d,m_{n})$-distributionally near to zero of type $1$ for $(({\mathcal A}_{j,k})_{k\in
{\mathbb N}})_{1\leq j\leq N}$ and \eqref{frechet-banach1-semivector} holds  with elements $x_{j,l_{k}}\in {\mathcal A}_{j,l_{k}}x;$
\item[(iii)] $x$ is $(d,\tilde{X},m_{n},1,2)$-Li-Yorke irregular vector for $(({\mathcal A}_{j,k})_{k\in
{\mathbb N}})_{1\leq j\leq N}$ iff $x$ is $(d,\tilde{X},m_{n},1,1)$-Li-Yorke irregular vector for $(({\mathcal A}_{j,k})_{k\in
{\mathbb N}})_{1\leq j\leq N}$ iff $x$ is reiteratively $(d,m_{n})$-distributionally near to zero of type $1$ for $(({\mathcal A}_{j,k})_{k\in
{\mathbb N}})_{1\leq j\leq N}$ and \eqref{frechet-banach1-vector} holds  with elements $x_{j,l_{k}}\in {\mathcal A}_{j,l_{k}}x;$
\item[(iv)] $x$ is $(d,\tilde{X},m_{n},1,2)$-Li-Yorke irregular vector for $(({\mathcal A}_{j,k})_{k\in
{\mathbb N}})_{1\leq j\leq N}$ iff $x$ is reiteratively $(d,m_{n})$-distributionally near to zero of type $1$ for $(({\mathcal A}_{j,k})_{k\in
{\mathbb N}})_{1\leq j\leq N}$ and \eqref{frechet-banach1-semivector} holds  with elements $x_{j,l_{k}}\in {\mathcal A}_{j,l_{k}}x;$
\item[(v)] $x$ is $(d,\tilde{X},m_{n},2,1)$-Li-Yorke irregular vector for $(({\mathcal A}_{j,k})_{k\in
{\mathbb N}})_{1\leq j\leq N}$ iff $x$ is $(d,m_{n})$-distributionally near to zero of type $1$ for $(({\mathcal A}_{j,k})_{k\in
{\mathbb N}})_{1\leq j\leq N}$ and \eqref{frechet-banach2-vector} holds with elements $x_{j,l_{k}^{j}}\in {\mathcal A}_{j,l_{k}^{j}}x;$
\item[(vi)] $x$ is $(d,\tilde{X},m_{n},2,1)$-Li-Yorke semi-irregular vector for $(({\mathcal A}_{j,k})_{k\in
{\mathbb N}})_{1\leq j\leq N}$ iff $x$ is $(d,m_{n})$-distributionally near to zero of type $1$ for $(({\mathcal A}_{j,k})_{k\in
{\mathbb N}})_{1\leq j\leq N}$ and \eqref{frechet-banach2-semivector} holds  with elements $x_{j,l_{k}^{j}}\in {\mathcal A}_{j,l_{k}^{j}}x;$
\item[(vii)] $x$ is $(d,\tilde{X},m_{n},2,2)$-Li-Yorke irregular vector for $(({\mathcal A}_{j,k})_{k\in
{\mathbb N}})_{1\leq j\leq N}$ iff $x$ is $(d,\tilde{X},m_{n},1,1)$-Li-Yorke irregular vector for $(({\mathcal A}_{j,k})_{k\in
{\mathbb N}})_{1\leq j\leq N}$ iff $x$ is reiteratively $(d,m_{n})$-distributionally near to zero of type $1$ for $(({\mathcal A}_{j,k})_{k\in
{\mathbb N}})_{1\leq j\leq N}$ and \eqref{frechet-banach2-vector} holds  with elements $x_{j,l_{k}^{j}}\in {\mathcal A}_{j,l_{k}^{j}}x;$
\item[(viii)] $x$ is $(d,\tilde{X},m_{n},2,2)$-Li-Yorke semi-irregular vector for $(({\mathcal A}_{j,k})_{k\in
{\mathbb N}})_{1\leq j\leq N}$ iff $x$ is reiteratively $(d,m_{n})$-distributionally near to zero of type $1$ for $(({\mathcal A}_{j,k})_{k\in
{\mathbb N}})_{1\leq j\leq N}$ and \eqref{frechet-banach2-semivector} holds  with elements $x_{j,l_{k}^{j}}\in {\mathcal A}_{j,l_{k}^{j}}x$.
\end{itemize}
\end{defn}
 
\begin{defn}\label{DC-unbounded-fric-DISJOINT-emenly-fric}
Let $i\in \{3,4\}.$ 
Suppose that, for every $j\in {\mathbb N}_{N}$ and $k\in {\mathbb N},$ ${\mathcal A}_{j,k} : D({\mathcal A}_{j,k})\subseteq X \rightarrow Y$ is an MLO, $\tilde{X}$ is a linear
subspace of $X,$ and $x\in \bigcap_{j\in {\mathbb N}_{N}}\bigcap_{k\in {\mathbb N}}D({\mathcal A}_{j,k}) \cap \tilde{X}.$
Then we say that:
\begin{itemize}
\item[(i)] $x$ is $(d,\tilde{X},1,3)$-Li-Yorke irregular vector for $(({\mathcal A}_{j,k})_{k\in
{\mathbb N}})_{1\leq j\leq N}$ iff \eqref{bunda-vector} and \eqref{frechet-banach1-vector} hold with elements $x_{j,l_{k}}\in {\mathcal A}_{j,l_{k}}x;$
\item[(ii)] $x$ is $(d,\tilde{X},1,3)$-Li-Yorke semi-irregular vector for $(({\mathcal A}_{j,k})_{k\in
{\mathbb N}})_{1\leq j\leq N}$ iff \eqref{bunda-vector} and \eqref{frechet-banach1-semivector} hold with elements $x_{j,l_{k}}\in {\mathcal A}_{j,l_{k}}x;$
\item[(iii)] $x$ is $(d,\tilde{X},1,4)$-Li-Yorke irregular vector for $(({\mathcal A}_{j,k})_{k\in
{\mathbb N}})_{1\leq j\leq N}$ iff \eqref{bundaw-vector} and \eqref{frechet-banach1-vector} hold with elements $x_{j,l_{k}}\in {\mathcal A}_{j,l_{k}}x;$
\item[(iv)] $x$ is $(d,\tilde{X},1,4)$-Li-Yorke irregular vector for $(({\mathcal A}_{j,k})_{k\in
{\mathbb N}})_{1\leq j\leq N}$ iff  \eqref{bundaw-vector} and \eqref{frechet-banach1-semivector} hold  with elements $x_{j,l_{k}}\in {\mathcal A}_{j,l_{k}}x;$
\item[(v)] $x$ is $(d,\tilde{X},2,3)$-Li-Yorke irregular vector for $(({\mathcal A}_{j,k})_{k\in
{\mathbb N}})_{1\leq j\leq N}$ iff \eqref{bunda-vector} and \eqref{frechet-banach2-vector} hold with elements $x_{j,l_{k}^{j}}\in {\mathcal A}_{j,l_{k}^{j}}x$;
\item[(vi)] $x$ is $(d,\tilde{X},2,3)$-Li-Yorke semi-irregular vector for $(({\mathcal A}_{j,k})_{k\in
{\mathbb N}})_{1\leq j\leq N}$ iff \eqref{bunda-vector} and \eqref{frechet-banach2-semivector} hold with elements $x_{j,l_{k}^{j}}\in {\mathcal A}_{j,l_{k}^{j}}x$;
\item[(vii)] $x$ is $(d,\tilde{X},2,4)$-Li-Yorke irregular vector for $(({\mathcal A}_{j,k})_{k\in
{\mathbb N}})_{1\leq j\leq N}$ iff \eqref{bundaw-vector} and \eqref{frechet-banach2-vector} hold with elements $x_{j,l_{k}^{j}}\in {\mathcal A}_{j,l_{k}^{j}}x$;
\item[(viii)] $x$ is $(d,\tilde{X},2,4)$-Li-Yorke semi-irregular vector for $(({\mathcal A}_{j,k})_{k\in
{\mathbb N}})_{1\leq j\leq N}$ iff \eqref{bundaw-vector} and \eqref{frechet-banach2-semivector} hold with elements $x_{j,l_{k}^{j}}\in {\mathcal A}_{j,l_{k}^{j}}x$.
\end{itemize}
\end{defn}

Let $\{0\} \neq X' \subseteq \tilde{X}$ be a linear manifold. 
\begin{itemize}
\item[d1.] Suppose $i,\ j\in {\mathbb N}_{2}$  and $(m_{n})\in {\mathrm R}.$ Then
we say that
 $X'$ is a $(d,\tilde{X},m_{n},i,j)$-Li-Yorke (semi-)irregular manifold\index{$(d,\tilde{X},m_{n},i,j)$-Li-Yorke
(semi-)irregular manifold} for $(({\mathcal A}_{j,k})_{k\in {\mathbb N}})_{1\leq j\leq N}$
($(d,m_{n},i,j)$-Li-Yorke (semi-)irregular manifold\index{Li-Yorke (semi-)irregular manifold} in the case that $\tilde{X}=X$)
iff any element $x\in (X' \cap
\bigcap_{j=1}^{N}\bigcap_{k=1}^{\infty}D({\mathcal A}_{j,k})) \setminus \{0\}$ is a
$(d,\tilde{X},m_{n},i,j)$-Li-Yorke (semi-)irregular vector for
$(({\mathcal A}_{j,k})_{k\in {\mathbb N}})_{1\leq j\leq N};$ the notion of a ($(d,m_{n},i,j)$-, $(d,\tilde{X},m_{n},i,j)$-)Li-Yorke (semi-)irregular manifold for $({\mathcal A}_{j})_{1\leq j\leq N}$ is defined similarly.
\item[d2.] Suppose $i\in {\mathbb N}_{2}$ and $j\in \{3,4\}.$ Then
we say that
 $X'$ is a $(d,\tilde{X},i,j)$-Li-Yorke (semi-)irregular manifold\index{$(d,\tilde{X},i,j)$-Li-Yorke
(semi-)irregular manifold} for $(({\mathcal A}_{j,k})_{k\in {\mathbb N}})_{1\leq j\leq N}$
($(d,i,j)$-Li-Yorke (semi-)irregular manifold\index{Li-Yorke (semi-)irregular manifold} in the case that $\tilde{X}=X$)
iff any element $x\in (X' \cap
\bigcap_{j=1}^{N}\bigcap_{k=1}^{\infty}D({\mathcal A}_{j,k})) \setminus \{0\}$ is a
$(d,\tilde{X},i,j)$-Li-Yorke (semi-)irregular vector for
$(({\mathcal A}_{j,k})_{k\in {\mathbb N}})_{1\leq j\leq N};$ the notion of a ($(d,i,j)$-, $(d,\tilde{X},i,j)$-)Li-Yorke (semi-)irregular manifold for $({\mathcal A}_{j})_{1\leq j\leq N}$ is defined similarly.
\end{itemize}
We have the following:
\begin{itemize}
\item[d3.]
Suppose that $i,\ j\in {\mathbb N}_{2},$ $(m_{n})\in {\mathrm R}$ and $0\neq x\in \tilde{X} \cap \bigcap_{j=1}^{N}\bigcap_{k=1}^{\infty}D({\mathcal A}_{j,k})$ is a
$(d,\tilde{X},m_{n},i,j)$-Li-Yorke (semi-)irregular vector for $(({\mathcal A}_{j,k})_{k\in {\mathbb N}})_{1\leq j\leq N}.$
Then $X'\equiv span\{x\}$
is a $(d,\tilde{X},m_{n},i,j)$-Li-Yorke (semi-)irregular manifold for
$(({\mathcal A}_{j,k})_{k\in {\mathbb N}})_{1\leq j\leq N};$
\item[d4.]
Suppose $i\in {\mathbb N}_{2},$ $j\in \{3,4\}$ and $0\neq x\in \tilde{X} \cap \bigcap_{j=1}^{N}\bigcap_{k=1}^{\infty}D({\mathcal A}_{j,k})$ is a
$(d,\tilde{X},i,j)$-Li-Yorke (semi-)irregular vector for $(({\mathcal A}_{j,k})_{k\in {\mathbb N}})_{1\leq j\leq N}.$
Then $X'\equiv span\{x\}$
is a $(d,\tilde{X},i,j)$-Li-Yorke (semi-)irregular manifold for\\
$(({\mathcal A}_{j,k})_{k\in {\mathbb N}})_{1\leq j\leq N}.$
\end{itemize}
If $X'$ is dense in $\tilde{X},$
then the notions of dense ($(d,m_{n},i,j)$-, $(d,\tilde{X},m_{n},i,j)$-)Li-Yorke
(semi-)irregular manifolds, $(d,i,j)$-, $(d,\tilde{X},i,j)$-)Li-Yorke
(semi-)irregular manifolds, etc., are defined analogically.

It can be simply verified by a great number of concrete and very plain examples that the notions of 
$(d,\tilde{X},m_{n},i,j)$-Li-Yorke chaos and $(d,\tilde{X},m_{n},i_{1},j_{1})$-Li-Yorke chaos differ
if  $(m_{n})\in {\mathrm R},$ $i,\ i_{1},\ j,\ j_{1}\in {\mathbb N}_{2}$ and $(i,j)\neq (i_{1},j_{1}),$ as well as that the notions of 
$(d,\tilde{X},i,j)$-Li-Yorke chaos and $(d,\tilde{X},i_{1},j_{1})$-Li-Yorke chaos differ
if $i,\ i_{1}\in {\mathbb N}_{2},$ $j,\ j_{1}\in \{3,4\}$ and $(i,j)\neq (i_{1},j_{1}).$
The counterexamples exist even for general sequences of linear continuous operators on finite-dimensional spaces, for which it is also clear that they can have $(d,\tilde{X},m_{n},i,j)$-Li-Yorke semi-irregular vectors but not any
$(d,\tilde{X},m_{n},i,j)$-Li-Yorke irregular vector (take, for example, $X:=Y:={\mathbb K}^{n},$ $T_{j}:=0$ for even $j'$s and $T_{j}:=2I$ for odd $j'$s). Two real problems are: 

\begin{prob}\label{pizdatistrina}
\begin{itemize}
\item[(i)] Let $(m_{n})\in {\mathrm R}$ and $i,\ j\in {\mathbb N}_{2}.$
Does there exist a tuple $(T_{j})_{1\leq j\leq N}$ of linear continuous operators 
on an infinite-dimensional space $X$ admitting a $(d,\tilde{X},m_{n},i,j)$-Li-Yorke semi-irregular vector and its neighborhood which does not contain any $(d,\tilde{X},m_{n},i,j)$-Li-Yorke irregular vector for $(T_{j})_{1\leq j\leq N}$?
\item[(ii)] Let $i\in {\mathbb N}_{2}$ and $j\in \{3,4\}.$ Does there exist a tuple $(T_{j})_{1\leq j\leq N}$ of linear continuous operators 
on an infinite-dimensional space $X$ admitting a $(d,\tilde{X},i,j)$-Li-Yorke semi-irregular vector and its neighborhood which does not contain any $(d,\tilde{X},i,j)$-Li-Yorke irregular vector for $(T_{j})_{1\leq j\leq N}$?
\end{itemize}
\end{prob}

In connection with the above problems, we would like to note that for a continuous linear operator $T\in L(X)$ any neighborhood of a $\tilde{X}$-Li-Yorke semi-irregular vector for $T$
contains a $\tilde{X}$-irregular vector for $T ,$ provided that $\{T^{j}x : j\in {\mathbb N}_{0}\}\subseteq \tilde{X}$ (see \cite[Lemma 7, Theorem 8]{band} and \cite[Definition 2.2]{nis-dragan}
for the notion, as well as
\cite[Lemma 3.5, Remark 3.6, Theorem 3.7]{nis-dragan} for a continuous analogue). Unfortunately, the proof of   
\cite[Lemma 7]{band} cannot be recovered for disjointness, to our best knowledge, and we must follow some other approaches for solving Problem \ref{pizdatistrina}. It is also clear that we can raise a continuous counterpart of Problem \ref{pizdatistrina} for strongly continuous semigroups of operators (cf. also the proof of implication \cite[Theorem 2.2, (1-2) $\Rightarrow$ (1-1)]{xwu}).

Concerning the notion introduced in Definition \ref{fabrika}, the corresponding notion of disjoint Li-Yorke irregular vectors and (uniform) disjoint Li-Yorke irregular manifolds can be 
also accompanied. The main difference is the use of notion of $(d,m_{n})$-distributionally $m$-unbounded vectors of type $1$ for $(({\mathcal A}_{j,k})_{k\in {\mathbb N}})_{1\leq j\leq N}$
and $m_{n}$-distributionally $m$-unbounded vectors for $(({\mathcal A}_{k})_{k\in {\mathbb N}})$ in place of conditions analyzed in the equations \eqref{frechet-banach1-vector} or \eqref{frechet-banach2-vector}; see \cite{ddcNS}
for more details. For the sake of brevity and better exposition, we will skip all related details about this subject.

In our previous research studies, we have observed some important differences between Banach spaces and Fr\' echet spaces concerning the existence of (disjoint) $m_{n}$-distributionally unbounded vectors. These differences are also perceived for (disjoint) Li-Yorke chaos (cf. \cite{ddc} for more details):

\begin{example}\label{gos}
Set $\tilde{B}:=\{k\in {\mathbb N} : {\mathcal A}_{j,k}\mbox{ is purely multivalued for all }j\in {\mathbb N}_{N}\}.$ Let $Y$ be a Banach space and let $\tilde{B}$ be infinite.  
Then any non-zero vector $x\in \bigcap_{j=1}^{N}\bigcap_{k=1}^{\infty}D({\mathcal A}_{j,k})$ satisfies \eqref{frechet-banach1-vector}, which is no longer true 
in the case that $Y$ is a Fr\' echet space.
\end{example}

\section{The proof and corollaries of main result}\label{main-reza}

We start this section by inserting the proof of Theorem \ref{na-dobro1-ly}:\vspace{0.1cm}

\noindent {\it Proof of Theorem \ref{na-dobro1-ly}.} The proof is very similar to that of \cite[Theorem 15]{2013JFA} and we will only outline the main details. It suffices to consider the case in which 
$X$ and $Y$ are Fr\' echet spaces whose topology is induced by a countable system of seminorms because otherwise we can endow $Y$  (or $X$, if it is a Banach space) with the following
increasing family of seminorms $p_{n}^{Y}(y):=n\|y\|_{Y}$ ($n\in {\mathbb N},$
$y\in Y$), which turns the space $Y$ into a linearly and topologically homeomorphic
Fr\' echet space. So, let it be the case. Then
it is clear that, for
every $j\in {\mathbb N}_{N}$ and $l,\ k\in {\mathbb N},
$ there exist finite numbers $c_{j,l,k}>0$ and $a_{j,l,k}\in {\mathbb N}$ such that $p_{l}^{Y}(T_{j,k}x)\leq c_{j,k,l}p_{a_{j,k,l}}(x),$ $x\in X,$ $k,\ l\in {\mathbb N},$ $j\in {\mathbb N}_{N}.$
Introducing recursively the following fundamental system of increasing seminorms $p_{n}'(\cdot)$ ($n\in {\mathbb N}$) on $X:$
\begin{align*}
& p_{1}'(x)\equiv  p_{1}(x),\quad x\in X,\\
& p_{2}'(x)\equiv \sum_{j=1}^{N}\bigl[p_{1}'(x)+c_{j,1,1}p_{a_{j,1,1}}(x)+p_{2}(x)\bigr],\quad x\in X,\\
& \cdot \cdot \cdot \\
& p_{n+1}'(x)\equiv \sum_{j=1}^{N}\bigl[p_{n}'(x)+c_{j,1,n}p_{a_{j,1,n}}(x)+\cdot \cdot \cdot +c_{j,n,1}p_{a_{j,n,1}}(x)+p_{n+1}(x)\bigr],\quad x\in X,\\
& \cdot \cdot \cdot ,
\end{align*}
we may assume without loss of generality that 
\begin{align}\label{grozno}
p_{Y}^{l}(T_{j,k}x)\leq p_{k+l}(x)\ \ \mbox{for all }\ \ x\in X,\ \ j\in {\mathbb N}_{N}\mbox{ and }\ \ k,\ l\in {\mathbb N}.
\end{align} 
Furthermore, we may assume without loss of generality that 
$m=1.$
Then we can construct a sequence $(x_{l})_{l\in {\mathbb N}}$ in $X_{0}$ and a strictly increasing sequence $(k_{l})_{k\in {\mathbb N}}$ of positive integers such that, for every $l\in {\mathbb N},$ one has: $p_{l}(x_{l})\leq 1,$ 
$p_{1}^{Y}(T_{j,n_{k_{l}}}x_{l})>l2^{l}$
and $ p_{Y}^{l}(T_{j,k}x_{s})<1/l ,$ provided $j\in {\mathbb N}_{N},$ $ s=1,\cdot \cdot \cdot,l-1
$ and $k\geq m_{k_{l}+1}/l.$
Take any strictly increasing sequence $(r_{q})_{q\in {\mathbb N}}$  in ${\mathbb N} \setminus \{1\}$
such that 
\begin{align}\label{miruga}
r_{q+1}\geq 1+r_{q}+m_{n_{k_{r_{q}+1}}}+n_{k_{r_{q}+1}}\mbox{ for all }q\in {\mathbb N}.
\end{align} 
Let $\alpha \in \{0,1\}^{\mathbb N}$ be a sequence defined by $\alpha_{s}=1$ iff $s=r_{q}$ for some $q\in {\mathbb N}.$
Further on, let $\beta \in \{0,1\}^{\mathbb N}$ 
contains an infinite number of $1'$s and let $\beta_{q}\leq \alpha_{q}$ for all $q\in {\mathbb N}.$ If $\beta_{r_{q_{0}}}=1$ for some $q_{0}\in {\mathbb N}$ and $x_{\beta}=\sum_{q=1}^{\infty}\beta_{r_{q}}x_{r_{q}}/2^{r_{q}},$ then with $k=n_{k_{r_{q_{0}}}}$ 
and $j\in {\mathbb N}_{N},$ we have  $1+k= 1+n_{k_{r_{q_{0}}}} \leq r_{q_{0}+1}$ for $q>q_{0},$ $k\geq m_{k_{r_{q}}}$ for $q<q_{0}$
and $p_{Y}^{r_{q}+1}(T_{j,k}x_{s})<1/(r_{q}+1)$ for $s<r_{q_{0}},$ 
as well as:
\begin{align*}
p_{Y}^{1}\bigl( T_{j,k}x_{\beta}\bigr) & \geq r_{q_{0}}-\sum_{q<q_{0}}\frac{p_{Y}^{1}(T_{j,k}x_{r_{q}})}{2^{r_{q}}} -\sum_{q>q_{0}}\frac{p_{Y}^{1}(T_{j,k}x_{r_{q}})}{2^{r_{q}}}
\\ & \geq r_{q_{0}}-\sum_{q<q_{0}}\frac{p_{Y}^{1}(T_{j,k}x_{r_{q}})}{2^{r_{q}}}-\sum_{q>q_{0}}\frac{p_{1+k}(x_{r_{q}})}{2^{r_{q}}}
\\ & \geq r_{q_{0}}-\sum_{q<q_{0}}\frac{1}{2^{r_{q}}(r_{q}+1)} -\sum_{q> q_{0}}\frac{1}{2^{r_{q}}} \geq r_{q_{0}}-1.
\end{align*}
Furthermore, if $k\in [1,m_{k_{r_{q_{0}}+1}}]$ and $p_{Y}^{r_{q_{0}}+1}(T_{j,k}x_{s})<1/(r_{q_{0}}+1)$ for $s<r_{q_{0}}+1,$
which holds provided that $k\geq m_{k_{r_{q_{0}}+1}+1}/(r_{q_{0}}+1)$, then we have $1+r_{q_{0}}+k\leq 1+r_{q_{0}}+m_{k_{r_{q_{0}}+1}}
\leq r_{q+1}$ due to \eqref{miruga} and therefore
\begin{align*}
p_{Y}^{q}\bigl( T_{j,k}x_{\beta}\bigr) & \leq \sum_{q\leq q_{0}}\frac{p_{Y}^{r_{q_{0}}+1}(T_{j,k}x_{r_{q}})}{2^{r_{q}}} +\sum_{q>q_{0}}\frac{p_{Y}^{r_{q_{0}}+1}(T_{j,k}x_{r_{q}})}{2^{r_{q}}}
\\ & \leq \sum_{q\leq q_{0}}\frac{1}{2^{r_{q}}(r_{q_{0}}+1)} +\sum_{q>q_{0}}\frac{p_{1+k+r_{q_{0}}}(x_{r_{q}})}{2^{r_{q}}}
\\ & \leq \frac{1}{2(r_{q_{0}}+1)}+\sum_{q>q_{0}}\frac{1}{2^{r_{q}}} \leq \frac{1}{r_{q_{0}}+1},\quad j\in {\mathbb N}_{N},
\end{align*}
which clearly implies that
\begin{align*}
d_{Y}\bigl( T_{j,k}x_{\beta},0\bigr)=
& \sum
\limits_{q=1}^{r_{q_{0}}+1}\frac{1}{2^{q}}\frac{p_{q}(T_{j,k}x_{\beta})}{1+p_{q}(T_{j,k}x_{\beta})}+\sum
\limits_{q=r_{q_{0}}+1}^{\infty}\frac{1}{2^{q}}\frac{p_{q}(T_{j,k}x_{\beta})}{1+p_{q}(T_{j,k}x_{\beta})}
\\ & \leq \frac{1}{r_{q_{0}}+1}+ \frac{1}{2^{r_{q_{0}}}},\quad j\in {\mathbb N}_{N}
\end{align*}
and $x_{\beta}$ is a $(d,span\{x_{\beta}\},m_{n},1,1)$-Li-Yorke irregular vector for $((T_{j,k})_{k\in {\mathbb N}})_{1\leq j\leq N}.$
The final statement of theorem now follows similarly as in the proofs of 
\cite[Theorem 15]{2013JFA} and \cite[Theorem 4.1]{reit}. 
\vspace{0.1cm}

Now we will  state the following corollary of Theorem \ref{na-dobro1-ly}:

\begin{cor}\label{rdctype}
Suppose that $X$ is separable, $((T_{j,k})_{k\in {\mathbb N}})_{1\leq j\leq N}$ is a sequence in $L(X,Y),$
$X_{0}$ is a dense linear subspace of $X,$ and $\lim_{k\rightarrow \infty}T_{j,k}x=0,$ $x\in X_{0},$ $j\in {\mathbb N}_{N}.$
Then the following statements are equivalent:
\begin{itemize}
\item[(i)] The tuple $((T_{j,k})_{k\in {\mathbb N}})_{1\leq j\leq N}$ is densely $(d,m_{n},1,1)$-Li-Yorke chaotic for some (all) $(m_{n})\in {\mathrm R}.$ 
\item[(ii)] The tuple $((T_{j,k})_{k\in {\mathbb N}})_{1\leq j\leq N}$ is densely $(d,m_{n},1,2)$-Li-Yorke chaotic for some (all) $(m_{n})\in {\mathrm R}$.
\item[(iii)] The tuple $((T_{j,k})_{k\in {\mathbb N}})_{1\leq j\leq N}$ is densely $(d,1,3)$-Li-Yorke chaotic.
\item[(iv)]  The tuple $((T_{j,k})_{k\in {\mathbb N}})_{1\leq j\leq N}$ is densely $(d,1,4)$-Li-Yorke chaotic.
\end{itemize}
\end{cor}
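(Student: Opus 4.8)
The plan is to prove the four statements equivalent by establishing the cycle $(i)\Rightarrow(ii)\Rightarrow(iii)\Rightarrow(iv)\Rightarrow(i),$ where the three forward implications are essentially formal (read off from Proposition \ref{beqa} and trivial inclusions) and the single substantial step is $(iv)\Rightarrow(i),$ which I would route through Theorem \ref{na-dobro1-ly}. Since each $T_{j,k}\in L(X,Y)$ is everywhere defined, for a pair of distinct points $x,y$ of a scrambled set the only admissible choices are $x_{j,k}=T_{j,k}x$ and $y_{j,k}=T_{j,k}y,$ so that $x_{j,k}-y_{j,k}=T_{j,k}(x-y);$ this reduces every condition to a statement about the single orbit of $z:=x-y.$ The ``for some (all)'' alternative I would dispatch at the end by observing that the hard implication actually yields the conclusion for \emph{every} $(m_{n})\in{\mathrm R},$ so the two readings of $(i)$ (and of $(ii)$) coincide once the cycle is closed.

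For the forward implications: $(i)\Rightarrow(ii)$ is Proposition \ref{beqa}(ii) (first assertion, $s=1$), which rests on the elementary remark recorded just before that proposition that $\underline{d}_{m_{n}}(A)=0$ forces $\underline{Bd}_{l;m_{n}}(A)=0,$ so that \eqref{bunda1} implies \eqref{bunda2}; the blow-up clause \eqref{frechet-banach1} is common to both notions and the same scrambled set is retained. Next, $(ii)\Rightarrow(iii)$ is the second assertion of Proposition \ref{beqa}(ii) with $s=1,$ passing from $\underline{Bd}_{l;m_{n}}$-smallness of $\bigcup_{j}\{k:d_{Y}(x_{j,k},y_{j,k})\ge\epsilon\}$ to a common subsequence $(n_{k})$ along which $d_{Y}(T_{j,n_{k}}x,T_{j,n_{k}}y)\to0$ for all $j$ (extracted by a diagonal argument over $\epsilon\to0$). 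Finally $(iii)\Rightarrow(iv)$ is immediate, since \eqref{bunda}, which fixes one subsequence valid for all $j,$ yields \eqref{bundaw} by setting $n_{k}^{j}:=n_{k};$ the blow-up clause is unchanged and the scrambled set persists. None of these steps uses the standing hypothesis.

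The core is $(iv)\Rightarrow(i).$ Assuming $(iv),$ fix any two distinct points $x,y$ of the dense uncountable scrambled set and put $z:=x-y\neq0.$ Condition \eqref{frechet-banach1}, which is built into every one of the four notions, supplies an index $m\in{\mathbb N}$ and a strictly increasing sequence $(l_{k})$ with $\lim_{k}p_{m}^{Y}(T_{j,l_{k}}z)=+\infty$ for all $j\in{\mathbb N}_{N}$ simultaneously (respectively the norm version when $Y$ is Banach). This is precisely hypothesis (ii) of Theorem \ref{na-dobro1-ly}, with $z$ in the role of the vector $y$ and $(l_{k})$ in the role of $(n_{k}).$ The remaining hypotheses are in force: $X$ is separable, the $T_{j,k}$ lie in $L(X,Y),$ $X_{0}$ is dense, and the standing assumption of the corollary is exactly hypothesis (i). Hence, for an \emph{arbitrary} $(m_{n})\in{\mathrm R},$ Theorem \ref{na-dobro1-ly} produces a dense submanifold $W\subseteq X$ whose nonzero vectors are $(d,m_{n})$-distributionally near to zero of type $1$ and admit a blow-up subsequence, and it asserts dense $(d,W,m_{n},1,1)$-Li-Yorke chaos. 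Since every $T_{j,k}$ is everywhere defined we have $\bigcap_{j,k}D(T_{j,k})=X,$ and $W$ is dense in $X;$ therefore the scrambled set furnished inside $W$ is already dense in $X$ and the qualifier $\tilde{X}=W$ may be upgraded to $\tilde{X}=X.$ This gives dense $(d,m_{n},1,1)$-Li-Yorke chaos for every $(m_{n}),$ i.e. statement $(i)$ in its ``for all'' form, closing the cycle and reconciling the ``some/all'' alternative in $(i)$ and $(ii).$

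I expect the main obstacle to be conceptual rather than computational: recognizing that the \emph{weakest} notion $(iv)$ already encodes, through the clause \eqref{frechet-banach1} shared by all the definitions, a single vector $z$ whose orbit blows up along one common subsequence for all $N$ components at once, which is exactly the data Theorem \ref{na-dobro1-ly} consumes to manufacture the \emph{strongest} notion $(i).$ A minor point requiring care is the bookkeeping of the $\tilde{X}$-parameter: one must check that dense $(d,W,m_{n},1,1)$-chaos with $W$ dense in $X$ coincides with dense $(d,X,m_{n},1,1)$-chaos, which it does because the defining conditions \eqref{frechet-banach1} and \eqref{bunda1} constrain only the pairs drawn from the scrambled set and are insensitive to the ambient subspace once $S$ is dense in $X.$
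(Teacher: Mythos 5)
Your proposal is correct and follows the paper's own route exactly: the cycle $(i)\Rightarrow(ii)\Rightarrow(iii)\Rightarrow(iv)\Rightarrow(i)$, with $(i)\Rightarrow(ii)$ resting on the fact that $\underline{d}_{m_{n}}(A)=0$ implies $\underline{Bd}_{l;m_{n}}(A)=0$, the middle implications being essentially formal, and $(iv)\Rightarrow(i)$ obtained by feeding the blow-up clause \eqref{frechet-banach1} (applied to $z=x-y$ for a pair from the scrambled set) into Theorem \ref{na-dobro1-ly}. The paper's proof is just a terser version of this; your added details (the diagonal extraction for $(ii)\Rightarrow(iii)$, the upgrade from $\tilde{X}=W$ to $\tilde{X}=X$, and the some/all reconciliation) are all sound.
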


\begin{proof}
For the proof of implication (i) $\Rightarrow$ (ii), it suffices to recall that the assumption 
$\underline{d}_{m_{n}}(A)=0$ for some $(m_{n})\in {\mathrm R}$ and $A\subseteq {\mathbb N}$ implies $\underline{Bd}_{l;m_{n}}(A)=0.$ The implications  (ii) $\Rightarrow$ (iii) $\Rightarrow$ (iv) are trivial, while the implication (iv) $\Rightarrow$ (i) follows from an application of Theorem \ref{na-dobro1-ly}.
\end{proof}

In connection with Theorem \ref{na-dobro1-ly}, it should be recalled that the existence of dense Li-Yorke irregular manifolds for orbits of linear continuous operators on Banach spaces
has been analyzed in \cite[Section 4]{band}. In particular, the authors have shown that for any operator $T\in L(X),$ where $X$ is a separable Banach space, the existence of a dense linear subspace $X_{0}$ of $X$ and a strictly increasing sequence $(l_{k})$ of positive integers such that $\lim_{k\rightarrow \infty}\|T^{l_{k}}x\|=0$ for all $x\in X_{0}$ implies that the Li-Yorke chaos of $T$ is equivalent either with the existence of dense Li-Yorke irregular manifold for $T$ or the existence of an unbounded orbit (see \cite[Corollary 33]{band}). The method used in the proof of this result is substantially different from that of \cite[Theorem 15]{2013JFA} and we will not reexamine it for disjoint Li-Yorke chaos.  

Now we state the following corollary of Theorem \ref{na-dobro1-ly}, which can be deduced by using the pivot spaces $[R(C)],$ $X$ and the sequence $(({\bf T}_{j,k})_{k\in {\mathbb N}})_{1\leq j\leq N},$ where
${\bf T}_{j,k}(Cx):=T_{j,k}Cx,$ $x\in X$
for $k\in {\mathbb N}$ and $j\in {\mathbb N}_{N}:$

\begin{cor}\label{deckonja}
Suppose that $T_{j,k} : D(T_{j,k})\subseteq X \rightarrow X$ is a linear mapping, $C\in L(X)$ is an injective
mapping with dense range, as well as
\begin{align*}
R(C)\subseteq D(T_{j,k})\mbox{ and }T_{j,k}C\in L(X)\mbox{ for all }k\in {\mathbb N}\mbox{ and }j\in {\mathbb N}_{N}.
\end{align*}
Suppose, further, that $X$ is separable, $m\in {\mathbb N},$ 
$X_{0}$ is a dense linear subspace of $X,$ $(m_{n})\in {\mathrm R}$ as well as:
\begin{itemize}
\item[(i)] $\lim_{k\rightarrow \infty}T_{j,k}Cx=0,$ $x\in X_{0},$ $j\in {\mathbb N}_{N};$
\item[(ii)] there exist a vector $y\in X$ and an increasing sequence $(n_{k})$ tending to infinity such that $\lim_{k\rightarrow \infty}p_{m}(T_{j,n_{k}}Cy)=+\infty ,$ $j\in {\mathbb N}_{N}$ [$\lim_{k\rightarrow \infty}\|T_{j,n_{k}}Cy\|=+\infty ,$ $j\in {\mathbb N}_{N}$, provided that $X$ is a Banach space].
\end{itemize}
Then there exists a dense
submanifold $W$ of $X$ consisting of those vectors $x\in R(C)$ such that $x$ is $(d,m_{n})$-distributionally near to zero of type $1$ for $((T_{j,k})_{k\in {\mathbb N}})_{1\leq j\leq N}$ and for which there exists a strictly increasing subsequence $(l_{k})$ of $(n_{k})$ such that
the sequence $(p_{m}(T_{j,l_{k}}x))_{k\in {\mathbb N}}$ tends to $+\infty$ for all $j\in {\mathbb N}_{N}$ [$(\|T_{j,l_{k}}x\|)_{k\in {\mathbb N}}$ tends to $+\infty$ for all $j\in {\mathbb N}_{N},$ provided that $X$ is a Banach space]. In particular, $((T_{j,k})_{k\in {\mathbb N}})_{1\leq j\leq N}$ is densely $(d,W,1,1)$-Li-Yorke chaotic.
\end{cor}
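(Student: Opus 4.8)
The plan is to deduce the corollary from Theorem \ref{na-dobro1-ly}: although the mappings $T_{j,k}$ need not be continuous on $X$, they become continuous once the source space $X$ is replaced by the pivot space $[R(C)]$, so the whole situation can be recast as an instance of Theorem \ref{na-dobro1-ly} for the pair $([R(C)],X)$. First I would record that $C : X \to [R(C)]$ is a topological isomorphism: by definition $p_{n}^{C}(Cx) = p_{n}(C^{-1}Cx) = p_{n}(x)$ for all $x\in X$ and $n\in {\mathbb N}$, so $C$ is an isometry between the calibrations $(p_{n})$ and $(p_{n}^{C})$, hence a homeomorphism. Consequently $[R(C)]$ is a separable Fréchet space, being the homeomorphic image of the separable Fréchet space $X$, and $C(X_{0})$ is a dense linear subspace of $[R(C)]$. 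Next I would define ${\bf T}_{j,k} : [R(C)] \to X$ on $R(C)$ by ${\bf T}_{j,k}w := T_{j,k}w$ and observe that, as a map on $[R(C)]$, ${\bf T}_{j,k} = (T_{j,k}C)\circ C^{-1}$; since $T_{j,k}C \in L(X)$ and $C^{-1} : [R(C)] \to X$ is continuous, we obtain ${\bf T}_{j,k}\in L([R(C)],X)$, which is precisely the regularity required by Theorem \ref{na-dobro1-ly}.

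It then remains to verify the two hypotheses of Theorem \ref{na-dobro1-ly} for the tuple $(({\bf T}_{j,k})_{k\in {\mathbb N}})_{1\leq j\leq N}$, the dense subspace $C(X_{0})$ of $[R(C)]$, and the target $X$. For (i): every $w\in C(X_{0})$ equals $Cx$ for some $x\in X_{0}$, so ${\bf T}_{j,k}w = T_{j,k}Cx \to 0$ as $k\to\infty$ by hypothesis (i) of the corollary. For (ii): taking the distinguished vector ${\bf y} := Cy \in R(C)=[R(C)]$ we have ${\bf T}_{j,n_{k}}{\bf y} = T_{j,n_{k}}Cy$, whence $p_{m}({\bf T}_{j,n_{k}}{\bf y}) = p_{m}(T_{j,n_{k}}Cy)\to +\infty$ for all $j\in {\mathbb N}_{N}$ by hypothesis (ii); the Banach case is identical with $p_{m}$ replaced by the norm.

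Theorem \ref{na-dobro1-ly} now produces a dense submanifold $W$ of $[R(C)]$ all of whose vectors $x$ are $(d,m_{n})$-distributionally near to zero of type $1$ for $(({\bf T}_{j,k})_{k\in {\mathbb N}})_{1\leq j\leq N}$ and admit a strictly increasing subsequence $(l_{k})$ of $(n_{k})$ with $p_{m}({\bf T}_{j,l_{k}}x)\to +\infty$ for every $j$. The concluding step is to transport this back to $X$. As $W \subseteq [R(C)] = R(C)$ and ${\bf T}_{j,k}x = T_{j,k}x$ for $x\in R(C)$, and since the target $X$ (with its seminorms $p_{m}$ and metric $d_{X}$) is unchanged, the distributional-nearness statement and the blow-up of $(p_{m}(T_{j,l_{k}}x))_{k}$ hold verbatim for $((T_{j,k})_{k})_{1\leq j\leq N}$, giving exactly the vectors $x\in R(C)$ described in the corollary. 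Density of $W$ in $X$ follows because the inclusion $[R(C)]\hookrightarrow X$ is continuous (continuity of $C$ yields $p_{n}(w)\le c\,p_{n'}^{C}(w)$ for suitable $c$ and $n'$): density in the finer topology of $[R(C)]$ forces density of $W$ in $R(C)$ for the $X$-topology, and combining this with $\overline{R(C)} = X$ makes $W$ dense in $X$. The final Li-Yorke chaos assertion then follows exactly as in Theorem \ref{na-dobro1-ly}. The only point needing care — and the mild obstacle — is this topological bookkeeping: the two distinct topologies on $R(C)$ must be tracked so that density transfers in the correct direction while all target-space quantities, measured in $X$, remain literally the same; it is this clean separation that makes the pivot-space reduction work.
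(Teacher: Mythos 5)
Your proposal is correct and is exactly the route the paper intends: the paper's entire justification is the one-line remark that the corollary "can be deduced by using the pivot spaces $[R(C)]$, $X$ and the sequence $(({\bf T}_{j,k})_{k\in {\mathbb N}})_{1\leq j\leq N}$, where ${\bf T}_{j,k}(Cx):=T_{j,k}Cx$," and your argument is a faithful, correctly detailed expansion of precisely that reduction (isometry of $C$ onto $[R(C)]$, continuity of ${\bf T}_{j,k}=(T_{j,k}C)\circ C^{-1}$, verification of (i)--(ii), and the density transfer through the continuous inclusion $[R(C)]\hookrightarrow X$ plus density of $R(C)$ in $X$).
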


\begin{rem}\label{obori-pjan}
Concerning possible applications of Theorem \ref{na-dobro1-ly} (similar conclusions hold for Corollary \ref{deckonja}), it should be noted the following facts with regards to the validity of condition (ii) in its formulation:
\begin{itemize}
\item[(i)] Suppose that $X$ and $Y$ are Banach spaces, $(n_{k})$ is a strictly increasing sequence and $((T_{j,k})_{k\in {\mathbb N}})_{1\leq j\leq N}$ is a sequence in $L(X,Y).$ If for each $j\in {\mathbb N}_{N}$ we have $\sum^{\infty}_{k=1}\frac{1}{\|T_{j,n_{k}}\|}<\infty ,$
then there exists $y\in X$ such that $\lim_{k\rightarrow \infty}\|T_{j,n_{k}}y\|_{Y}=\infty $ for each $j\in {\mathbb N}_{N}.$
\item[(ii)]  Suppose that $X$ is a complex Hilbert space, $Y$ is a complex Banach space, $(n_{k})$ is a strictly increasing sequence and $((T_{j,k})_{k\in {\mathbb N}})_{1\leq j\leq N}$ is a sequence in $L(X,Y).$ If for each $j\in {\mathbb N}_{N}$ 
we have $\sum^{\infty}_{k=1}\frac{1}{\|T_{j,n_{k}}\|^{2}}<\infty ,$ then there exists $y\in X$ such that $\lim_{k\rightarrow \infty}\|T_{j,n_{k}}y\|_{Y}=\infty $ for each $j\in {\mathbb N}_{N}.$
\end{itemize}
The statements (i) and (ii) are trivial consequences of \cite[Proposition 3.9]{ddc}, which slightly extend one of the main results of article \cite{milervrs} by V. M\"uller and J. Vr\v sovsk\'y.
\end{rem}

We close this section with the observation that \cite[Proposition 3]{cheli} can be reformulated for disjointness, showing that some investigations can be reduced to the case in which $\tilde{X}=X.$

\subsection{Applications to shift operators}\label{shiftojed}

Suppose that $X$ is a Fr\' echet sequence space in which $(e_{n})_{n\in {\mathbb N}}$ is a basis (see e.g. \cite[Section 4.1]{erdper}). In this subsection, we will always assume that for each $j\in {\mathbb N}_{N}$ the unilateral weighted backward shift $T_{j}$ on $X$ is
given by
\begin{align*}
T_{j}& \bigl\langle x_{n}\bigr\rangle_{n\in {\mathbb N}}:=\bigl\langle w_{j,n}x_{n+1}\bigr\rangle_{n\in {\mathbb N}},\quad \bigl\langle x_{n}\bigr\rangle_{n\in {\mathbb N}}\in X,\mbox{ and }
\\ & D\bigl(T_{j}\bigr):=\Bigl\{ \bigl\langle x_{n}\bigr\rangle_{n\in {\mathbb N}} \in X : T_{j}\bigl\langle x_{n}\bigr\rangle_{n\in {\mathbb N}}\in X\Bigr\} \quad \bigl( j\in {\mathbb N}_{N}\bigr).
\end{align*}
The continuity of operators $T_{j}$ will not be assumed a priori. 

We start by providing the following illustrative example:

\begin{example}\label{prika-shift}
Suppose that $X:=l^{1}({\mathbb N}),$ 
$0<\zeta_{1}\leq \zeta_{2}\leq \cdot \cdot \cdot \leq \zeta_{n}\leq 1,$
$\langle \omega_{n}\rangle_{n\in {\mathbb N}}:= \langle \frac{2n}{2n-1}\rangle_{n\in {\mathbb N}}$ 
and $\langle \omega_{j,n}\rangle_{n\in {\mathbb N}}:= \langle (\frac{2n}{2n-1})^{\zeta_{j}}\rangle_{n\in {\mathbb N}}$ 
for all $j\in {\mathbb N}_{N};$ see also \cite[Theorem 3.5]{turkish-notes} and \cite[Example 4.9]{reit}.
Then for each $j\in {\mathbb N}_{N}$ the corresponding 
operator $T_{j}$ is topologically mixing, absolutely Ces\`aro bounded and therefore not distributionally chaotic; albeit this basically follows from the argumentation used in the proof of \cite[Theorem 3.5]{turkish-notes}, we will include all relevant details for the sake of completeness.
Applying Stirling's formula, we get that 
\begin{align}\label{bavcxz}
\beta (n):=\prod_{j=1}^{n}\omega_{i} \sim \sqrt{\pi n},\quad n\rightarrow +\infty.
\end{align}
Using this and \cite[Proposition 3.1]{turkish-notes}, we get that the operator $T_{j}$ is topologically mixing ($j\in {\mathbb N}_{N}$). Furthermore, for each
$n\in {\mathbb N},$ $X\ni x=\langle x_{k}\bigr\rangle_{k\in {\mathbb N}} \neq 0$ and $j\in {\mathbb N}_{N}$
we have:
\begin{align*}
\frac{1}{n}&\sum_{l=1}^{n}\bigl\|T_{j}^{l}x\bigr\|=\frac{1}{n}\sum_{l=1}^{n}\sum_{k=l+1}^{\infty}\bigl( \omega_{k-l}\cdot \cdot \cdot \omega_{k-1} \bigr)^{\zeta_{j}}|x_{k}|
\\ & =\frac{1}{n}\sum_{k=2}^{\infty}\sum_{l=1}^{\min(k-1,n)}\bigl( \omega_{k-l}\cdot \cdot \cdot \omega_{k-1} \bigr)^{\zeta_{j}}|x_{k}|
\\ & =\frac{1}{n}\sum_{k=2}^{n+1}\sum_{l=1}^{k-1}\bigl( \omega_{k-l}\cdot \cdot \cdot \omega_{k-1} \bigr)^{\zeta_{j}}|x_{k}|+\frac{1}{n}\sum_{k=n+2}^{\infty}\sum_{l=1}^{n}\bigl( \omega_{k-l}\cdot \cdot \cdot \omega_{k-1} \bigr)^{\zeta_{j}}|x_{k}|
\\ & \leq \frac{1}{n}\sum_{k=2}^{n+1}\sum_{l=1}^{k-1}\omega_{k-l}\cdot \cdot \cdot \omega_{k-1} |x_{k}|+\frac{1}{n}\sum_{k=n+2}^{\infty}\sum_{l=1}^{n} \omega_{k-l}\cdot \cdot \cdot \omega_{k-1}|x_{k}|.
\end{align*}
For the estimation of second addend, the arguments used in \cite{turkish-notes} show that it does not exceed $2\|x\|.$ For the first addend, we can employ \eqref{bavcxz}
 in order to see that there exist two finite constants $c>0$ and $c_{1}>0$ such that
\begin{align*}
\frac{1}{n}\sum_{k=2}^{n+1}\sum_{l=1}^{k-1}\omega_{k-l}\cdot \cdot \cdot \omega_{k-1} |x_{k}|\leq \frac{c}{n}\sum_{k=2}^{n+1}|x_{k}|\sum_{l=1}^{k-1}\sqrt{\frac{k}{k-l}}
\leq \frac{c_{1}}{n}\sum_{k=2}^{n+1}|x_{k}|\leq c_{1}\|x\|,
\end{align*}
finishing the proof of fact that $T_{j}$ is absolutely Ces\`aro bounded and consequently not distributionally chaotic ($j\in {\mathbb N}_{N}$). This implies that the operators $T_{1},\cdot \cdot \cdot,T_{N}$ cannot be $(d,X,i)$-distributionally chaotic for any $i\in {\mathbb N}_{8};$ see \cite{ddc}. On the other hand, the operator $T_{1}$ is clearly Li-Yorke chaotic and possesses a Li-Yorke irregular vector $y.$ In our concrete example, we have $\|T_{1}^{n_{k}}x\|\leq \|T_{j}^{n_{k}}x\|$ for any $x\in X,$ $j\in {\mathbb N}_{N}\setminus \{1\}$ and any strictly increasing sequence $(n_{k})$ so that
Corollary \ref{deckonja} with $C=I$ yields that the operators $T_{1},\cdot \cdot \cdot,T_{N}$ are densely $(d,1,1)$-Li-Yorke chaotic. Finally, we want to note that these operators cannot be $d$-hypercyclic due to \cite[Theorem 2.1]{prcko-kolekt} (cf. also \cite{bp07} and \cite{research} for basic results given in this direction).
\end{example}

Now we will provide an application of Corollary \ref{deckonja} to unbounded unilateral backward shift operators:

\begin{example}\label{malo-zajebano}
Let $S:=\{n_{k}: k\in {\mathbb N}\},$ where $(n_{k})$ is a strictly increasing sequence of positive integers, and let the operator $A_{j} \langle x_{n}\rangle_{n\in {\mathbb N}}:=\langle (1+j)^{n+1}x_{n+1}\rangle_{n\in {\mathbb N}}$ act with its maximal domain in the space $X:=c_{0}({\mathbb N})$ for $j\in {\mathbb N}_{N}$. Set
$C\langle x_{n}\rangle_{n\in {\mathbb N}}:=\langle (3/2)^{-n^{2}}x_{n}\rangle_{n\in {\mathbb N}},$ $\langle x_{n}\rangle_{n\in {\mathbb N}}\in X.$ Then it is clear that $C\in L(X)$ is injective and $R(C)$ is dense in $X.$
Furthermore, it can be easily seen that $A_{j}^{k}C\in L(X)$ for all $j\in {\mathbb N}_{N}$ and $k\in {\mathbb N}$; strictly speaking, for any vector $x:=\langle x_{n}\rangle_{n\in {\mathbb N}}$ in $X$
we have
\begin{align*}
\bigl\|A_{j}^{k}Cx\bigr\|& \leq \|x\| \sup_{l\geq 1}(1+j)^{lk+\frac{k(k+1)}{2}}(3/2)^{-(l+k)^{2}}
\\ & \leq \|x\| (1+j)^{\frac{k(k+1)}{2}}\sup_{l\geq 1}\Bigl((1+j)^{k}\Bigr)^{l}(3/2)^{-l^{2}}
\\ & \leq c_{j,k}\|x\|,
\end{align*}
for some positive finite constant $c_{j,k}>0.$
On the other hand, with sequence 
$x:=\langle 1/n\rangle_{n\in {\mathbb N}}$
we have
\begin{align*}
\bigl\|A_{j}^{k}Cx\bigr\|&= \sup_{l\geq 1}(1+j)^{lk+\frac{k(k+1)}{2}}(3/2)^{-(l+k)^{2}}\bigl|x_{k+l}\bigr|
\\ & \geq (1+j)^{k^{2}+\frac{k(k+1)}{2}}(3/2)^{-(2k)^{2}}\bigl|x_{2k}\bigr|
\\ & =(1+j)^{k^{2}+\frac{k(k+1)}{2}}(3/2)^{-(2k)^{2}}/2k\rightarrow \infty,\quad k\in {\mathbb N}.
\end{align*}
Define now
$T_{j,k}:=A_{j}^{k}$ if $k\in S$ and $T_{j,k}:=(1+\|A_{j}^{k}C\|)^{-3}A_{j}^{k}$ if $k\notin S.$ By the above argumentation, we have that the requirements of Corollary \ref{deckonja} are satisfied, so that $((T_{j,k})_{k\in {\mathbb N}})_{1\leq j\leq N}$ is densely $(d,X,1,1)$-Li-Yorke chaotic.
\end{example}

Arguing as in \cite[Example 5.3]{ddc}, we can prove that there exist two distributionally chaotic unilateral backward weighted shifts on the space $X:=c_{0}({\mathbb N})$ which cannot be $(d,X,n,1,i)$-Li-Yorke chaotic for $i\in {\mathbb N}_{2}$ or 
$(d,X,n,3,i)$-Li-Yorke chaotic for $i\in {\mathbb N}_{2}.$ 

Further on, 
it is clear that Theorem \ref{na-dobro1-ly} and Corollary \ref{deckonja} cannot be applied in the analysis of weighted forward shifts in Fr\' echet sequence spaces. 
On the other hand, we can prove directly that tuples of such operators are disjoint Li-Yorke chaotic with $e_{1}$ being the corresponding disjoint Li-Yorke irregular vector:

\begin{example}\label{count-grof-ly32445} 
Let
a weighted forward shift $F_{\omega} \in L(
l^{2})$ be defined by $F_{\omega} (x_{1}, x_{2}, \cdot \cdot \cdot) \mapsto (0, \omega_{1}x_{1},\omega_{2}x_{2},\cdot \cdot \cdot),$ where $\omega =(\omega_{k})_{k\in {\mathbb N}}$ consists of sufficiently large blocks of $2$'s and blocks of $(1/2)$'s.
In a great number of concrete situations, we have that the operators $c_{j}F_{\omega},$ where $c_{j}\in {\mathbb K}\setminus \{0\}$ for $j\in {\mathbb N}_{N},$ are $(d,span\{e_{1}\},1,1)$-Li-Yorke chaotic. Observe, finally, that these operators cannot be 
disjoint hypercyclic because $F_{\omega}$ and its multiples cannot be hypercyclic.   
\end{example}

\section{Applications to abstract PDEs in Fr\'echet spaces}\label{well-posed}

The main aim of this section is to continue the research raised in \cite{nis-dragan} concerning Li-Yorke chaotic solutions of abstract PDEs of first order. In contrast with the above-mentioned article, we consider here Li-Yorke chaotic solutions of abstract fractional PDEs as well. For the sake of brevity, we will consider only continuous counterpart of disjoint $(\tilde{X},m_{n},1,1)$-Li-Yorke chaos here, which will be called disjoint $(\tilde{X},f,1,1)$-Li-Yorke chaos (cf. \cite{reit} and \cite{ddcNS} for the notion of disjoint reiterative $\tilde{X}_{f}$-distributional chaos
and certain applications to abstract PDEs).

Suppose that $T(t) : D(T(t)) \subseteq X \rightarrow Y$ is a linear possibly not continuous mapping ($t\geq 0$). By $Z(T)$ we denote the set consisting of those vectors $x\in X$ such that
$x\in D(T(t))$ for all $t\geq 0$ as well as that the mapping $t\mapsto T(t)x,$ $t\geq 0$ is continuous.  
Denote by $m(\cdot)$ the Lebesgue measure on $[0,\infty)$ and by ${\mathrm F}$ the class consisting of all increasing mappings $f : [0,\infty) \rightarrow [1,\infty)$ satisfying that  $\liminf_{t\rightarrow +\infty}\frac{f(t)}{t}>0.$

We will use the following continuous counterpart of Definition \ref{prckojed}:

\begin{defn}\label{prckojed-prim} (\cite{F-operatori})
Let $A\subseteq [0,\infty)$, and let $f \in {\mathrm F}.$ Then the lower $f$-density of $A,$ denoted by $\underline{d}_{f}(A),$ is defined through:
$$
\underline{d}_{f} (A):=\liminf_{t\rightarrow \infty}\frac{m(A \cap [0,f(t)])}{t}.
$$
\end{defn}

Consider the following condition: 
\begin{align}\label{frechet-banach1-kontinuiran}
\begin{split}
& (\exists m\in {\mathbb N})(\forall k\in {\mathbb N})(\exists t_{k}\in [0,\infty)) \mbox{  s.t. }t_{k}<t_{k+1},\mbox{ }\lim_{k\rightarrow \infty}t_{k}=+\infty,\ k\in {\mathbb N},
\\ &\mbox{ and }\lim_{k\rightarrow \infty}p_{m}^{Y}\bigl(x_{j,t_{k}}-y_{j,t_{k}}\bigr)=+\infty, \ k\in {\mathbb N},\ j\in {\mathbb N}_{N},\mbox{ provided that }Y\mbox{ is a Fr\' echet space}, or
\\
& (\forall k\in {\mathbb N})(\exists t_{k}\in [0,\infty)) \mbox{  s.t. }t_{k}<t_{k+1},\mbox{  }\lim_{k\rightarrow \infty}t_{k}=+\infty,\ k\in {\mathbb N},
\\ & \mbox{ and }\lim_{k\rightarrow \infty}\bigl\|x_{j,t_{k}}-y_{j,t_{k}}\bigr\|=+\infty, \  j\in {\mathbb N}_{N},\mbox{ provided that }Y\mbox{ is a Banach space}.
\end{split}
\end{align}

\begin{defn}\label{LY-unbounded-fric-cont} 
Suppose that $\tilde{X}$ is a linear subspace of $X,$ $T_{j}(t) : D(T_{j}(t)) \subseteq X \rightarrow Y$ is a linear possibly not continuous mapping ($t\geq 0,$ $j\in {\mathbb N}_{N}$) and $f \in {\mathrm F}.$ If there exist an uncountable
set $S\subseteq \bigcap_{j\in {\mathbb N}_{N}}Z(T_{j}) \cap \tilde{X}$ and $m\in {\mathbb N},$ in the case that $Y$ is a Fr\' echet space, such that \eqref{frechet-banach1-kontinuiran} holds and for each $\epsilon>0$ and for each pair $x,\
y\in S$ of distinct points we have that 
\begin{align}\label{priqaz}
\underline{d}_{f}\Biggl( \bigcup_{j\in {\mathbb N}_{N}} \bigl\{t\geq 0 : d_{Y}\bigl(T_{j}(t)x,T_{j}(t)y\bigr)
\geq \epsilon \bigr\}\Biggr)=0,
\end{align}
then we say that the tuple $((T_{j}(t))_{t\geq 0})_{1\leq j\leq N}$ is 
$(d,\tilde{X}, f,1 ,1)$-Li-Yorke chaotic ($(d,f,1,1)$-Li-Yorke chaotic, if $\tilde{X}=X$). 
Furthermore, we say that  the tuple $((T_{j}(t))_{t\geq 0})_{1\leq j\leq N}$ is densely
$(d,\tilde{X}, f,1 ,1)$-Li-Yorke chaotic iff $S$ can be chosen to be dense in $\tilde{X}.$
The set $S$ is said to be $(d,{\sigma_{\tilde{X}}}_{f})$-Li-Yorke scrambled set ($(d,\sigma_{f})$-scrambled set in the case that $\tilde{X}=X$)     
of $((T_{j}(t))_{t\geq 0})_{1\leq j\leq N}.$

If $q\geq 1$ and $f(t):=1+t^{q}$ ($t\geq 0$), then we particularly obtain the notions of (dense) disjoint $\tilde{X}_{q}$-Li-Yorke chaos, (dense) disjoint $q$-Li-Yorke chaos, $(d,{\sigma_{\tilde{X}}}_{q})$-Li-Yorke scrambled set and $(d,\sigma_{q})$-Li-Yorke scrambled set for $((T_{j}(t))_{t\geq 0})_{1\leq j\leq N}.$
\end{defn}

The main result for applications is the following continuous counterpart of Theorem \ref{na-dobro1-ly}; the proof can be deduced similarly and therefore omitted (cf. \cite{mendoza} and \cite{reit} for more details):

\begin{thm}\label{na-dobro1-ly-cont}
Suppose that $X$ is separable, $m\in {\mathbb N},$ $f \in {\mathrm F},$ $((T_{j}(t))_{t\geq 0})_{1\leq j\leq N}$ is a sequence of strongly continuous operator families in $L(X,Y),$
$X_{0}$ is a dense linear subspace of $X,$ as well as:
\begin{itemize}
\item[(i)] $\lim_{t\rightarrow \infty}T_{j}(t)x=0,$ $x\in X_{0},$ $j\in {\mathbb N}_{N};$
\item[(ii)] there exist a vector $y\in X$ and an increasing sequence $(t_{k}')$ tending to infinity such that $\lim_{k\rightarrow \infty}p_{m}^{Y}(T_{j}(t_{k}')y)=+\infty ,$ $j\in {\mathbb N}_{N}$ [$\lim_{k\rightarrow \infty}\|T_{j}(t_{k}')y\|_{Y}=+\infty ,$ $j\in {\mathbb N}_{N},$ provided that $Y$ is a Banach space].
\end{itemize}
Then there exist a dense
submanifold $W$ of $X$ consisting of those vectors $x$ which are disjoint $f$-distributionally near to zero for $((T_{j}(t))_{t\geq 0})_{1\leq j\leq N},$ in the sense that for each number $\epsilon>0$ we have that \eqref{priqaz} holds with $y=0,$ and for which there exists a strictly increasing subsequence $(t_{k})$ of  $(t_{k}')$ tending to infinity such that
the sequence $(p_{m}(T_{j}(t_{k})x))_{k\in {\mathbb N}}$ tends to $+\infty$ for all $j\in {\mathbb N}_{N}$ [$(\|T_{j}(t_{k})x\|_{Y})_{k\in {\mathbb N}}$ tends to $+\infty$ for all $j\in {\mathbb N}_{N},$ provided that $Y$ is a Banach space]. In particular, the tuple $((T_{j}(t))_{t\geq 0})_{1\leq j\leq N}$ is densely $(d,W,f,1,1)$-Li-Yorke chaotic.
\end{thm}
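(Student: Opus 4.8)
The plan is to transcribe the proof of Theorem~\ref{na-dobro1-ly} into the continuous setting, with the Lebesgue lower density $\underline{d}_{f}$ on $[0,\infty)$ playing the role of $\underline{d}_{m_{n}}$ on ${\mathbb N}$ and the continuous time parameter $t\geq 0$ replacing the index $k\in {\mathbb N}.$ As there, I would first reduce to the case in which $X$ and $Y$ carry countable increasing systems of seminorms (endowing a Banach space $Y$ with $p_{n}^{Y}(y):=n\|y\|_{Y}$ when needed), and I would assume without loss of generality that $m=1.$

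The one genuinely new ingredient, compared with the discrete argument, is that the seminorm estimates must hold uniformly on compact time intervals. For each $j\in {\mathbb N}_{N}$ the orbit map $t\mapsto T_{j}(t)x$ is continuous, hence bounded on every interval $[0,R],$ so the family $\{T_{j}(t):0\leq t\leq R\}\subseteq L(X,Y)$ is pointwise bounded; since a Fr\' echet space is barrelled, the Banach--Steinhaus theorem makes this family equicontinuous, yielding for every $l\in {\mathbb N}$ and $R>0$ finite constants $c>0$ and indices $a\in {\mathbb N}$ with $p_{l}^{Y}(T_{j}(t)x)\leq c\,p_{a}(x)$ for all $x\in X,$ $t\in[0,R]$ and $j\in {\mathbb N}_{N}.$ Splitting $[0,\infty)$ along the integers and performing the same recursive enlargement of the fundamental system $(p_{n})_{n}$ as in the proof of Theorem~\ref{na-dobro1-ly}, I may then assume a clean bound of the form $p_{l}^{Y}(T_{j}(t)x)\leq p_{\lceil t\rceil+l}(x)$ valid for all $x,\ t,\ l$ and $j.$

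With these estimates at hand the inductive construction runs as before: using (i) to make tails small and (ii) with the blow-up sequence $(t_{k}')$ to force large values, I build a sequence $(x_{l})\subseteq X_{0}$ with $p_{l}(x_{l})\leq 1,$ a strictly increasing sequence of times $(s_{l})$ selected from $(t_{k}')$ along which $p_{1}^{Y}(T_{j}(s_{l})x_{l})$ is large for every $j,$ together with decay estimates $p_{l}^{Y}(T_{j}(t)x_{s})<1/l$ for $s<l$ and all sufficiently large $t.$ Choosing a sparse index set $(r_{q})$ obeying a growth condition analogous to \eqref{miruga} and setting $x_{\beta}:=\sum_{q}\beta_{r_{q}}x_{r_{q}}/2^{r_{q}}$ for admissible $\{0,1\}$-sequences $\beta$ with infinitely many ones, the two estimate-chains of the discrete proof show, on the one hand, that $p_{1}^{Y}(T_{j}(s_{r_{q_{0}}})x_{\beta})$ blows up for every $j$ as $q_{0}\to\infty$ (this furnishes the required subsequence $(t_{k})$ of $(t_{k}')$ and verifies \eqref{frechet-banach1-kontinuiran}), and, on the other hand, that $d_{Y}(T_{j}(t)x_{\beta},0)$ is as small as desired off a set of times whose complement has lower $f$-density zero, which is exactly \eqref{priqaz} with $y=0.$

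Finally, varying $\beta$ over uncountably many admissible sequences produces uncountably many such vectors, and, using density of $X_{0}$ in the choice of the seeds $x_{l},$ the standard closing argument of \cite[Theorem 15]{2013JFA} and \cite[Theorem 4.1]{reit} upgrades this to a dense submanifold $W$ with the two stated properties and hence to dense $(d,W,f,1,1)$-Li-Yorke chaos. I expect the principal obstacle to be the continuous density bookkeeping --- one must check that replacing the counting of bad indices by the Lebesgue measure of bad time sets still yields vanishing lower $f$-density --- and it is precisely here that the uniform estimates on compact intervals coming from the Banach--Steinhaus step are indispensable.
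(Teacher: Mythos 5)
Your proposal is correct and takes essentially the same approach as the paper: the author omits the proof of Theorem \ref{na-dobro1-ly-cont} precisely on the grounds that it ``can be deduced similarly'' to Theorem \ref{na-dobro1-ly} (citing \cite{mendoza} and \cite{reit}), and your transcription of that discrete argument is exactly the intended adaptation. In particular, you correctly identify the only genuinely new ingredient, namely the Banach--Steinhaus/equicontinuity step on compact time intervals (legitimate since Fr\'echet spaces are barrelled and the orbit maps are continuous), which justifies the re-norming $p_{l}^{Y}(T_{j}(t)x)\leq p_{\lceil t\rceil +l}(x)$ and lets the inductive construction and the Lebesgue-measure density bookkeeping go through verbatim.
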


We continue by providing two simple remarks:

\begin{rem}\label{maliremark}
Suppose that $X$ and $Y$ are Banach spaces as well as that the tuple $((T_{j}(t))_{t\geq 0})_{1\leq j\leq N}$ of strongly continuous operator families in $L(X,Y)$ satisfies (i) and 
$$
\lim_{t\rightarrow \infty}\sum_{j=1}^{N}\bigl\|T_{j}(t)\bigr\|_{L(X,Y)}=+\infty.
$$ 
Considering the operators $T(t) : X^{N} \rightarrow Y^{N}$ defined by  $T(t)(x_{1},\cdot \cdot \cdot,x_{N}):=(T_{1}(t)x_{1},\cdot \cdot \cdot,T_{N}(t)x_{N})$ for $t\geq 0$ and $x_{1},\cdot \cdot \cdot,x_{N}\in X,$ it can be simply proved that 
there exist a strictly increasing sequence $(t_{k})$ of positive real numbers and a vector $(x_{1},\cdot \cdot \cdot,x_{N}) \in X^{N}$ such that $\lim_{k\rightarrow \infty}[\|T_{1}(t_{k})x_{1}\|_{Y}+\cdot \cdot \cdot +\|T_{N}(t_{k})x_{N}\|_{Y} ]=+\infty.$ But, this does not imply the validity of condition (ii)
in Theorem \ref{na-dobro1-ly-cont}.
\end{rem}

\begin{rem}\label{dist-chaos-remarkrweq}
Suppose that $X$ is a Banach space and $T_{j}\in L(X)$ for all $j\in {\mathbb N}_{N}.$ If there exists an element $y\in X$ such that $\lim_{k\rightarrow \infty}\|T_{j}^{n_{k}}y\|=+\infty$ for all $j\in {\mathbb N}_{N},$ then for each integer $m\in {\mathbb N}$ we have that $\lim_{k\rightarrow \infty}\|T_{j}^{m\lfloor n_{k}/m\rfloor}y\|=+\infty$ for all $j\in {\mathbb N}_{N},$ as well; this can be deduced along the lines of proof of \cite[Proposition 2.4]{parij}. Similarly, if $X$ is a Banach space, $(T_{j}(t))_{t\geq 0}$ is a strongly continuous semigroup on $X$ for each $j\in {\mathbb N}_{N},$ $y\in X$ and $\lim_{k\rightarrow \infty}\|T_{j}(t_{k}')y\|=+\infty,$ $j\in {\mathbb N}_{N}$ for some strictly increasing seqeunce $(t_{k}')$ tending to infinity, then for each $t_{0}>0$ we have $\lim_{k\rightarrow \infty}\|T_{j}(t_{0}\lfloor t_{k}'/t_{0}\rfloor)y\|=+\infty,$ $j\in {\mathbb N}_{N}.$
\end{rem}

The trivial case in which the requirements of Theorem \ref{na-dobro1-ly-cont} hold, and which can be also reworded for disjoint $m_{n}$-distributional chaos, is given as follows:

\begin{example}\label{prcko-frcko}
Suppose that $X$ is separable, $m\in {\mathbb N},$ $f \in {\mathrm F},$ $(T_{1}(t))_{t\geq 0}$ is a strongly continuous operator family in $L(X,Y)$ satisfying the condition (ii) of  Theorem \ref{na-dobro1-ly-cont} with $j=1,$
$X_{0}$ is a dense linear subspace of $X,$ as well as
for each $x\in X_{0}$ there exists a finite number $t_{0}>0$ such that $T_{1}(t)x=0$ for all $t>t_{0}.$ 
Suppose, further, that for each integer
$j\in {\mathbb N}_{N} \setminus \{1\}$ we have that $f_{j} :[0,\infty) \rightarrow {\mathbb K}$ is a given continuous function as well as that there exists a sufficiently small number $c>0$ such that
$|f_{j}(t)|\geq c,$ $t\geq 0,$ $j\in {\mathbb N}_{N} \setminus \{1\}.$ Define $T_{j}(t):=f_{j}(t)T_{1}(t),$ $t\geq 0,$ $j\in {\mathbb N}_{N} \setminus \{1\}.$ Then it can be simply checked that all requirements of Theorem \ref{na-dobro1-ly-cont} hold.
In particular, if $d/dx$ is the infinitesimal generator of a Li-Yorke chaotic strongly continuous translation semigroup in the space $L_{\rho}^{p}([0,\infty))$ of $C_{0,\rho}([0,\infty)),$ then the strongly continuous semigroups generated by the operators
\begin{align}\label{idiotishen-jedan}
\frac{d}{dx},\, \frac{d}{dx}+\omega_{2},\cdot \cdot \cdot, \,\frac{d}{dx} +\omega_{N}
\end{align}
are densely $(d,X,f,1,1)$-Li-Yorke chaotic, where $\omega_{2},\cdot \cdot \cdot,\omega_{N}$ are certain scalars from the field ${\mathbb K}$ having non-negative real parts (cf. \cite[Lemma 4.6]{fund} for precise definition of generator); the same statement holds
for Li-Yorke chaotic strongly continuous semigroups induced by semiflows for which the condition \cite[(D), p. 25]{nis-dragan} holds. Observe, finally, that there exists a strongly continuous translation semigroup $(T_{1}(t))_{t\geq 0}$ on the space $L_{\rho}^{p}([0,\infty)),$ with a certain weight function $\rho :[0,\infty) \rightarrow (0,\infty),$ which is topologically mixing but not distributionally chaotic (see e.g. \cite[Example 4.2]{bara1} and \cite[Theorem 2.1]{gimenez}). This implies that the strongly continuous semigroups $(T_{1}(t))_{t\geq 0},(e^{\omega_{2}t}T_{1}(t))_{t\geq 0},\cdot \cdot \cdot,(e^{\omega_{N}t}T_{1}(t))_{t\geq 0}$, whose generators are given by \eqref{idiotishen-jedan}, cannot be $(d,X,i)$-distributionally chaotic for any $i\in {\mathbb N}_{8};$ see \cite{nsjom-novi} for the notion.
\end{example}

The following corollary of Theorem \ref{na-dobro1-ly-cont} can be deduced similarly as in discrete case: 

\begin{cor}\label{deckonja-frcj}
Suppose that $X$ is separable, $m\in {\mathbb N},$ $f \in {\mathrm F},$ $((T_{j}(t))_{t\geq 0})_{1\leq j\leq N}$ is a sequence of linear operator families in $L(X,Y),$
$X_{0}$ is a dense linear subspace of $X,$ $C\in L(X)$ is injective with the mapping $t\mapsto T_{j}(t)Cx$ be well-defined and continuous for $t\geq 0$ and $j\in {\mathbb N}_{N},$ as well as:
\begin{itemize}
\item[(i)] $\lim_{t\rightarrow \infty}T_{j}(t)Cx=0,$ $x\in X_{0},$ $j\in {\mathbb N}_{N};$
\item[(ii)] there exist a vector $y\in X$ and an increasing sequence $(t_{k}')$ tending to infinity such that $\lim_{k\rightarrow \infty}p_{m}^{Y}(T_{j}(t_{k}')Cy)=+\infty ,$ $j\in {\mathbb N}_{N}$ [$\lim_{k\rightarrow \infty}\|T_{j}(t_{k}')Cy\|_{Y}=+\infty ,$ $j\in {\mathbb N}_{N},$ provided that $Y$ is a Banach space].
\end{itemize}
Then there exist a dense
submanifold $W$ of $X$ consisting of those vectors $x\in R(C)$ which are disjoint $f$-distributionally near to zero for $((T_{j}(t))_{t\geq 0})_{1\leq j\leq N},$ in the sense that for each number $\epsilon>0$ we have that \eqref{priqaz} holds with $y=0,$ and for which there exists a strictly increasing subsequence $(t_{k})$ of  $(t_{k}')$ tending to infinity such that
the sequence $(p_{m}(T_{j}(t_{k})x))_{k\in {\mathbb N}}$ tends to $+\infty$ for all $j\in {\mathbb N}_{N}$ [$(\|T_{j}(t_{k})x\|_{Y})_{k\in {\mathbb N}}$ tends to $+\infty$ for all $j\in {\mathbb N}_{N},$ provided that $Y$ is a Banach space]. In particular, the tuple $((T_{j}(t))_{t\geq 0})_{1\leq j\leq N}$ is densely $(d,W,f,1,1)$-Li-Yorke chaotic.
\end{cor}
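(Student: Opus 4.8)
The plan is to deduce Corollary \ref{deckonja-frcj} from Theorem \ref{na-dobro1-ly-cont} exactly as Corollary \ref{deckonja} was deduced from Theorem \ref{na-dobro1-ly}, namely by transporting the whole problem into the pivot space $[R(C)]$ through the regularizing operator $C$. Since $C\in L(X)$ is injective, the calibration $(p_n^C)_{n\in\mathbb N}$ with $p_n^C(\cdot)=p_n(C^{-1}\cdot)$ turns $R(C)$ into the Fr\'echet space $[R(C)]$, and $p_n^C(Cx)=p_n(x)$ shows that $C:X\to[R(C)]$ is a topological isomorphism; in particular $[R(C)]$ is separable because $X$ is. I would then introduce the tuple $((\mathbf T_j(t))_{t\geq 0})_{1\leq j\leq N}$ of families $\mathbf T_j(t):[R(C)]\to Y$ defined by $\mathbf T_j(t)(Cx):=T_j(t)Cx$ ($x\in X$, $t\geq 0$, $j\in\mathbb N_N$), which is well defined since $C$ is injective and $R(C)\subseteq D(T_j(t))$.

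The next step is to check that this tuple fulfils all hypotheses of Theorem \ref{na-dobro1-ly-cont} on the pair $([R(C)],Y)$ with the dense subspace $X_0':=C(X_0)$. Because $C$ is a homeomorphism and $X_0$ is dense in $X$, the set $X_0'$ is dense in $[R(C)]$. Each $\mathbf T_j(t)$ belongs to $L([R(C)],Y)$, since $\mathbf T_j(t)=(T_j(t)C)\circ C^{-1}$, the map $C^{-1}:[R(C)]\to X$ is a topological isomorphism and $T_j(t)C\in L(X,Y)$ (exactly as the assumption $T_{j,k}C\in L(X)$ in the discrete corollary); strong continuity $t\mapsto\mathbf T_j(t)(Cx)=T_j(t)Cx$ is precisely the standing hypothesis that $t\mapsto T_j(t)Cx$ is continuous. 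Condition (i) of the corollary reads $\lim_{t\to\infty}\mathbf T_j(t)(Cx)=0$ for $Cx\in X_0'$, which is (i) of the theorem, while condition (ii) with $\mathbf y:=Cy$ reads $\lim_{k\to\infty}p_m^Y(\mathbf T_j(t_k')\mathbf y)=\lim_{k\to\infty}p_m^Y(T_j(t_k')Cy)=+\infty$ (resp. the Banach variant), which is (ii) of the theorem.

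Applying Theorem \ref{na-dobro1-ly-cont} then yields a submanifold $\mathbf W$, dense in $[R(C)]$, each of whose vectors $w$ is disjoint $f$-distributionally near to zero for $((\mathbf T_j(t))_{t\geq 0})$ and admits a strictly increasing subsequence $(t_k)$ of $(t_k')$ with $p_m^Y(\mathbf T_j(t_k)w)\to+\infty$ for all $j\in\mathbb N_N$; moreover the tuple $((\mathbf T_j(t))_{t\geq 0})$ is densely $(d,\mathbf W,f,1,1)$-Li-Yorke chaotic. I would take $W:=\mathbf W\subseteq R(C)\subseteq X$. Since $C\in L(X)$ gives $p_n(z)=p_n(C(C^{-1}z))\leq c_n\,p_{b_n}(C^{-1}z)=c_n\,p_{b_n}^C(z)$ for suitable $c_n,b_n$, the inclusion $[R(C)]\hookrightarrow X$ is continuous, so $\mathbf W$ is dense in $R(C)$ for the topology of $X$; combined with the density of $R(C)$ in $X$ this makes $W$ dense in $X$. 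Finally, every quantity entering the definition of disjoint $(d,W,f,1,1)$-Li-Yorke chaos lives on the $Y$-side, where $\mathbf T_j(t)z=T_j(t)z$ literally, so the near-to-zero property (condition \eqref{priqaz} with $y=0$), the blow-up of $(p_m^Y(T_j(t_k)x))_k$, and the dense Li-Yorke chaoticity transfer verbatim from $[R(C)]$ to $X$.

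The main obstacle I anticipate is not in the transport itself but in justifying that the regularized families really meet the hypotheses of Theorem \ref{na-dobro1-ly-cont}: one must secure $\mathbf T_j(t)\in L([R(C)],Y)$, i.e. $T_j(t)C\in L(X,Y)$, together with the strong continuity of $(\mathbf T_j(t))_{t\geq 0}$, from the weaker-looking assumption that $t\mapsto T_j(t)Cx$ is well defined and continuous; and the density-transfer step quietly requires $\overline{R(C)}=X$, so that $C$ has dense range exactly as in Corollary \ref{deckonja}. Once these routine points are in place, the conclusion is immediate from the continuous main theorem.
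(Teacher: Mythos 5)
Your proposal is correct and is exactly the argument the paper intends: the paper's entire proof of Corollary \ref{deckonja-frcj} is the remark that it "can be deduced similarly as in discrete case", i.e.\ from Theorem \ref{na-dobro1-ly-cont} via the pivot spaces $[R(C)]$, $X$ and the families ${\bf T}_{j}(t)(Cx):=T_{j}(t)Cx$, which is precisely your construction with the details filled in. The two points you flag (securing ${\bf T}_{j}(t)\in L([R(C)],Y)$, i.e.\ $T_{j}(t)C\in L(X,Y)$, and the implicitly needed dense range of $C$) are faithful to the paper as well, since both hypotheses appear explicitly in the discrete Corollary \ref{deckonja} whose proof is being transplanted.
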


\begin{rem}\label{dist-chaos-remark}
The condition (ii) in Theorem \ref{na-dobro1-ly-cont} (Corollary \ref{deckonja-frcj}) is satisfied in many concrete cases in which there exists a vector $y\in X$ such that $\lim_{t\rightarrow \infty}p_{m}^{Y}(T_{j}(t)y)=+\infty ,$ $j\in {\mathbb N}_{N}$ ($\lim_{t\rightarrow \infty}p_{m}^{Y}(T_{j}(t)Cy)=+\infty ,$ $j\in {\mathbb N}_{N}$); but, in this case, for each function $f \in {\mathrm F}$ the tuple $((T_{j}(t))_{t\geq 0})_{1\leq j\leq N}$ will be densely $(d,f)$-distributionally chaotic (cf. \cite{ddcNS} for discrete case) in the sense 
that there exist an uncountable
set $S\subseteq X$ and a finite number $\sigma>0$ such that 
\begin{align*}
\underline{d}_{f}\Biggl( \bigcup_{j\in {\mathbb N}_{N}} \bigl\{t\geq 0 : d_{Y}\bigl(T_{j}(t)x,T_{j}(t)y\bigr)
\leq \sigma \bigr\}\Biggr)=0
\end{align*} 
and for each $\epsilon>0$ and for each pair $x,\
y\in S$ of distinct points we have that 
\begin{align*}
\underline{d}_{f}\Biggl( \bigcup_{j\in {\mathbb N}_{N}} \bigl\{t\geq 0 : d_{Y}\bigl(T_{j}(t)x,T_{j}(t)y\bigr)
\geq \epsilon \bigr\}\Biggr)=0,
\end{align*}
which is a much stronger notion than that of dense $(d,X,1,1)$-Li-Yorke chaos (furthermore, for each number $t_{0}>0$ the operators $T_{1}(t_{0}),\cdot \cdot \cdot, T_{N}(t_{0})$ will be densely $(d,m_{n})$-distributionally chaotic, where $m_{n}:=\lceil f(n)\rceil$). A concrete example for abstract fractional PDEs can be simply constructed: Suppose that $0<\alpha<2$ and for each $j\in {\mathbb N}_{N}$ we have that $(T_{j}(t))_{t\geq 0}\subseteq L(X)$
is an $\alpha$-times $C$-regularized resolvent family with the integral generator $A_{j},$ as well as that $R(C)$ is dense in $X$ and there exist a vector $x\in X$ and a number $\lambda_{j}\in \Sigma_{\alpha \pi/2}$ such that $A_{j}x=\lambda_{j}x;$ see \cite{knjigaho} for the notion. Then, due to \cite[Lemma 3.3.1]{knjigaho}, for each $j\in {\mathbb N}_{N}$ we have that $T_{j}(t)x=E_{\alpha}(t^{\alpha}\lambda_{j})Cx,$ $t\geq 0,$ so that the asymptotic expansion formulae for the Mittag-Leffler functions \cite{knjigaho} yield that 
the tuple $((T_{j}(t))_{t\geq 0})_{1\leq j\leq N}$ is densely $(d,f)$-distributionally chaotic for each function $f \in {\mathrm F}.$ 
\end{rem}

We will provide the following illustrative application of Corollary \ref{deckonja-frcj}; see also \cite[Example 5.12]{mendoza} and \cite[Example 3.2.39]{knjigaho}:

\begin{example}\label{jebiga-radu}
Suppose that $n\in {\mathbb N},$ $f \in {\mathrm F},$ 
$\rho(t):=\frac{1}{t^{2n}+1},\ t\in {\mathbb R},$ $Af:=f^{\prime},$
$D(A):=\{f\in C_{0,\rho}({\mathbb R}) : f^{\prime} \in
C_{0,\rho}({\mathbb R})\},$ $E_{n}:=(C_{0,\rho}({\mathbb
R}))^{n+1},$ $D(A_{n}):=D(A)^{n+1}$ and $A_{n}(f_{1},\cdot \cdot
\cdot ,f_{n+1}):=(Af_{1}+Af_{2},Af_{2}+Af_{3},\cdot \cdot \cdot ,
Af_{n}+Af_{n+1},Af_{n+1}),$ $(f_{1},\cdot \cdot \cdot, f_{n+1}) \in
D(A_{n}).$ Then $\pm A_{n}$
generate global polynomially bounded $n$-times integrated semigroups
$(S_{n,\pm}(t))_{t\geq 0},$ neither $A_{n}$ nor $-A_{n}$
generates a local $(n-1)$-times integrated semigroup and we have that, for every $\varphi_{1},...,
\varphi_{n+1} \in {\mathcal D},$
$$
G_{\pm,n}(\delta_{t})\bigl(\varphi_{1}, ... ,
\varphi_{n+1}\bigr)^{T}=\bigl(\psi_{1}, ... , \psi_{n+1}\bigr)^{T},
$$
where $G_{\pm,n}$ denote distribution semigroups generated by $\pm A_{n},$ and
$$
\psi_{i}(\cdot)=\sum \limits_{j=0}^{n+1-i}\frac{(\pm
t)^{j}}{j!}\varphi_{i+j}^{(j)}(\cdot \pm t),\quad 1\leq i\leq n+1.
$$  
Denote by ${\mathbf G}_{n}$ the corresponding distribution cosine function generated by $A_{n}^{2}.$ Using Corollary 
\ref{deckonja-frcj}, the first observation from Remark \ref{dist-chaos-remark}, and
arguing as in the above-mentioned examples, we can prove that the operator families $((e^{ia_{j}t}(1+t)^{b_{j}}G_{\pm,n}(\delta_{t})_{t\geq 0})_{1\leq j\leq N}$ and $((e^{ia_{j}t}(1+t)^{b_{j}}{\mathbf G}_{n}(\delta_{t}))_{t\geq 0})_{1\leq j\leq N}$ are densely\\ $(d,f)$-distributionally chaotic, provided that $a_{j}\in {\mathbb R}$ and $b_{j}\in [0,n+1)$ for all $j\in {\mathbb N}_{N}.$ The interested reader may simply write down the corresponding abstract Cauchy problems of first and second order which do have such operator families as solutions.
\end{example}

Concerning possible applications to the abstract ill-posed Cauchy problems of first and second order, we should also mention the paper 
\cite{ckpv}:
\begin{itemize}
\item[(i)] (see \cite[Example 2.12]{ckpv}) Consider the general situation of this example with the first inclusion in the equation \cite[(2.7)]{ckpv} being satisfied. If there exists a complex number $\lambda \in \Omega$ such that $\Re(P_{j}(-\lambda))>0$ for all $j\in {\mathbb N}_{N},$ then Corollary \ref{deckonja-frcj} is applicable to the entire $C$-regularized groups $((T_{j}(z))_{z\in {\mathbb C}})_{1\leq j\leq N};$ the tuple  $((C^{-1}T_{j}(t))_{t\geq 0})_{1\leq j\leq N}$ will be densely $(d,f)$-distributionally chaotic for any $f\in {\mathrm F}.$
\item[(ii)]  (see \cite[Example 2.13]{ckpv}) Similarly, we can simply modify the correpsonding conditions used in this example to conclude that the operators $A_{j}$ will generate exponentially equicontinuous, analytic $\zeta$-times integrated semigroups $(S_{\zeta}^{j}(t))_{t\geq 0}$ of angle $\pi/2$ on the product space $X\equiv E\times E$ ($j\in {\mathbb N}_{N}$). 
Then we can apply Corollary \ref{deckonja-frcj} in order to show that the tuple  $((C^{-1}S_{\zeta}^{j}(t))_{t\geq 0})_{1\leq j\leq N}$ is densely $(d,f)$-distributionally chaotic for any $f\in {\mathrm F},$ with a certain regularizing operator $C\in L(X).$   
\end{itemize}

Fairly complete analysis of disjoint Li-Yorke chaos for strongly continuous semigroups induced by semiflows is without scope of this paper (cf. \cite{kalmes} and \cite{nis-dragan} for related problematic).
We will only revisit \cite[Example 3.19]{kalmes} and \cite[Example 3.1.41(iii)]{knjigaho} to close the whole paper:

\begin{example}\label{ns-faul}
Suppose that any element of a real matrix $[a_{js}]_{1\leq j\leq N,1\leq s\leq m}$ is a
positive real number and, for every $j,\ s\in {{\mathbb N}_{N}}$
with $j\neq s,$ there exists an index $l\in {{\mathbb N}_{m}}$ such that $a_{jl}\neq a_{sl}.$ Let $\Omega:={{\mathbb R}^{m}}$ and
$q>\frac{m}{2}.$
Define semiflows $\varphi_{j} : [0,\infty) \times \Omega \rightarrow
\Omega,$ $j=1,2,\cdot \cdot \cdot , N$ and $\rho : \Omega
\rightarrow (0,\infty)$ as follows:
\begin{align*}
\varphi_{j}(t,x_{1},\cdot \cdot \cdot
,x_{m}):=\Bigl(e^{a_{j1}t}x_{1},\cdot \cdot \cdot , e^{a_{jm}t}x_{m}\Bigr)
\mbox{ and}
\end{align*}
\begin{align*}
\rho(x_{1},\cdot \cdot \cdot
,x_{m}):=\frac{1}{(1+|x|^{2})^{q}},\ t\geq 0,\ x=(x_{1},\cdot
\cdot \cdot ,x_{m})\in \Omega.
\end{align*}
Define $[T_{\varphi_{j}}(t)f](x):=f(\varphi_{j}(t,x)),$ $t\geq 0,$ $x\in \Omega ,$ $j\in {\mathbb N}_{N}.$ Then, for every $j\in {\mathbb N}_{N},$
$(T_{\varphi_{j}}(t))_{t\geq 0}$ is a non-hypercyclic
strongly continuous semigroup in 
$C_{0,\rho}(\Omega).$ 
Suppose now that $0<\epsilon_{1}<\epsilon_{2}<\cdot \cdot \cdot<\epsilon_{N}<\min\{a_{js}: 1\leq j\leq N,1\leq s\leq m\}.$
Since for each $j\in {\mathbb N}_{N}$ and $f\in {\mathcal D}({\mathbb R}^{m})$ (the space of scalar-valued smooth test functions with compact support and domain ${\mathbb R}^{m}$) we have $\|T_{\varphi_{j}}(t)f\|\leq \|f\|_{\infty},$ it can be simply seen that the condition (i) of Theorem \ref{na-dobro1-ly-cont} holds for
$((e^{-\epsilon_{j}t}T_{\varphi_{j}}(t))_{t\geq 0})_{1\leq j\leq N},$
with $X_{0}:={\mathcal D}({\mathbb R}^{m}).$ The condition (ii) of Theorem \ref{na-dobro1-ly-cont} also holds and, in this concrete situation, we have that $\lim_{t\rightarrow \infty}\|e^{-\epsilon_{j}t}T_{\varphi_{j}}(t)y\|=+\infty ,$ $j\in {\mathbb N}_{N}$
with the vector $y:=|\cdot|.$ Therefore, the strongly continuous semigroups $((e^{-\epsilon_{j}t}T_{\varphi_{j}}(t))_{t\geq 0})_{1\leq j\leq N}$ are densely $(d,f)$-distributionally chaotic for each function $f \in {\mathrm F}.$ 
\end{example}

\end{document}